\newtheorem{theorem}{Theorem}
\theoremstyle{plain}
\newtheorem{definition}{Definition}
\newtheorem{lemma}{Lemma}
\numberwithin{equation}{section}
\begin{document}
\title[Everywhere h\"{o}lder continuity]{On the local everywhere H\"{o}lder
continuity of the minima of a class of vectorial integral functionals of the
calculus of variations}
\author{Tiziano Granucci}
\address{ISIS\ Leonardo da Vinci, Firenze, via del Terzolle 91, 50100, Italy}
\email{tizianogranucci@libero.it, tizianogranucci@isisdavinci.eu}
\urladdr{https://www.tizianogranucci.com/}
\thanks{Declarations: Data sharing not applicable to this article as no
datasets were generated or analysed during the current study. The author has
no conicts of interest to declare that are relevant to the content of this
article.\\
I would like to thank all my family and friends for the support given to me
over the years: Elisa Cirri, Caterina Granucci, Delia Granucci, Irene
Granucci, Laura and Fiorenza Granucci, Massimo Masi and Monia Randolfi. Also
I would like to thank my professors Luigi Barletti, Giorgio Busoni, Elvira
Mascolo, Giorgio Talenti and Vincenzo Vepri }
\date{Sectember 27, 2021}
\subjclass[2000]{ 49N60, 35J50}
\keywords{Everywhere regularity, h\"{o}lder continuiuty, vectorial,
minimizer, variational, integral}
\dedicatory{Dedicated to the memory of Fiorella Pettini.}
\thanks{This paper is in final form and no version of it will be submitted
for publication elsewhere.}

\begin{abstract}
In this paper we study the everywhere H\"{o}der continuity of the minima of
the following class of vectorial integral funcionals 
\begin{equation*}
\int\limits_{\Omega }\sum\limits_{\alpha =1}^{m}\left\vert \nabla u^{\alpha
}\right\vert ^{p}+G\left( x,u,\left\vert \nabla u^{1}\right\vert
,...,\left\vert \nabla u^{m}\right\vert \right) \,dx
\end{equation*}%
with some general condidion on the density $G$. We make the following
assumptions about the function $G$. Let $\Omega $ be a bounded open subset
of $%
\mathbb{R}
^{n}$ with $n\geq 2\ $and let $G:\Omega \times 
\mathbb{R}
^{m}\times 
\mathbb{R}
_{0,+}^{m}\rightarrow 
\mathbb{R}
$ be a Caratheodory function, where $%
\mathbb{R}
_{0,+}=\left[ 0,+\infty \right) $ $\ $and $%
\mathbb{R}
_{0,+}^{m}=%
\mathbb{R}
_{0,+}\times \cdots \times 
\mathbb{R}
_{0,+}$ with $m\geq 1$; we make the following growth conditions on $G$:\
there exists a constant $L>1$ such that%
\begin{eqnarray*}
\sum\limits_{\alpha =1}^{m}\left\vert \xi ^{\alpha }\right\vert
^{q}-\sum\limits_{\alpha =1}^{m}\left\vert s^{\alpha }\right\vert
^{q}-a\left( x\right) &\leq &G\left( x,s^{1},...,s^{m},\left\vert \xi
^{1}\right\vert ,...,\left\vert \xi ^{m}\right\vert \right) \\
&\leq &L\left[ \sum\limits_{\alpha =1}^{m}\left\vert \xi ^{\alpha
}\right\vert ^{q}+\sum\limits_{\alpha =1}^{m}\left\vert s^{\alpha
}\right\vert ^{q}+a\left( x\right) \right]
\end{eqnarray*}%
for $\mathcal{L}^{n}$ a. e. $x\in \Omega $, for every $s^{\alpha }\in 
\mathbb{R}
$ and for every $\xi ^{\alpha }\in 
\mathbb{R}
$ with $\alpha =1,...,m$ and $m\geq 1$ and with $a\left( x\right) \in
L^{\sigma }\left( \Omega \right) $, $a(x)\geq 0$ for $\mathcal{L}^{n}$ a. e. 
$x\in \Omega $, $\sigma >\frac{n}{p}$, $1\leq q<\frac{p^{2}}{n}$ and $1<p<n$%
. \bigskip Assuming that the previous growth hypothesis holds, we prove the
following regularity result. if $u\in W^{1,p}\left( \Omega ,%
\mathbb{R}
^{m}\right) $ is a local minimizer of the previous functional then $%
u^{\alpha }\in C_{loc}^{o,\beta _{0}}\left( \Omega \right) $ for every $%
\alpha =1,...,m$, with $\beta _{0}\in \left( 0,1\right) $. The regularity of
minimizers is obtained by proving that each component stays in a suitable De
Giorgi class and, from this, we conclude about the H\"{o}der continuity.
\end{abstract}

\maketitle

\newpage

\section{\protect\bigskip Introduction}

In this paper we study the everywhere regualiry of the minima of the
following class of vectorial integral functional of the calculus of variation%
\begin{equation}
\mathcal{F}\left( u,\Omega \right) =\int\limits_{\Omega }\sum\limits_{\alpha
=1}^{m}\left\vert \nabla u^{\alpha }\right\vert ^{p}+G\left( x,u,\left\vert
\nabla u^{1}\right\vert ,...,\left\vert \nabla u^{m}\right\vert \right) \,dx
\label{1.1}
\end{equation}

We make the following assumptions about the function $G$.

\begin{description}
\item[H.1] Let $\Omega $ be a bounded open subset of $%
\mathbb{R}
^{n}$ with $n\geq 2\ $and let $G:\Omega \times 
\mathbb{R}
^{m}\times 
\mathbb{R}
_{0,+}^{m}\rightarrow 
\mathbb{R}
$ be a Caratheodory function, where $%
\mathbb{R}
_{0,+}=\left[ 0,+\infty \right) $ $\ $and $%
\mathbb{R}
_{0,+}^{m}=%
\mathbb{R}
_{0,+}\times \cdots \times 
\mathbb{R}
_{0,+}$ with $m\geq 1$; we make the following growth conditions on $G$:\
there exists a constant $L>1$ such that%
\begin{equation*}
\sum\limits_{\alpha =1}^{m}\left\vert \xi ^{\alpha }\right\vert
^{q}-\sum\limits_{\alpha =1}^{m}\left\vert s^{\alpha }\right\vert
^{q}-a\left( x\right) \leq G\left( x,s^{1},...,s^{m},\left\vert \xi
^{1}\right\vert ,...,\left\vert \xi ^{m}\right\vert \right) \leq L\left[
\sum\limits_{\alpha =1}^{m}\left\vert \xi ^{\alpha }\right\vert
^{q}+\sum\limits_{\alpha =1}^{m}\left\vert s^{\alpha }\right\vert
^{q}+a\left( x\right) \right]
\end{equation*}%
for $\mathcal{L}^{n}$ a. e. $x\in \Omega $, for every $s^{\alpha }\in 
\mathbb{R}
$ and for every $\xi ^{\alpha }\in 
\mathbb{R}
$ with $\alpha =1,...,m$ and $m\geq 1$ and with $a\left( x\right) \in
L^{\sigma }\left( \Omega \right) $, $a(x)\geq 0$ for $\mathcal{L}^{n}$ a. e. 
$x\in \Omega $, $\sigma >\frac{n}{p}$, $1\leq q<\frac{p^{2}}{n}$ and $1<p<n$.
\end{description}

\bigskip Assuming that the previous growth hypothesis H.1 holds, we prove
the following regularity result.

\begin{theorem}
\label{th1} Let $\Omega $ be a bounded open subset of $%
\mathbb{R}
^{n}$ with $n\geq 2$; if $u\in W^{1,p}\left( \Omega ,%
\mathbb{R}
^{m}\right) $, with $m\geq 1$, is a local minimum of the functional \ref{1.1}
and $H.1$\ holds then $u^{\alpha }\in C_{loc}^{o,\beta _{0}}\left( \Omega
\right) $ for every $\alpha =1,...,m$, with $\beta _{0}\in \left( 0,1\right) 
$.
\end{theorem}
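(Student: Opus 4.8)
The strategy, as indicated in the abstract, is to run De Giorgi's method one scalar component at a time. For a fixed index $\alpha$ I would show that $u^{\alpha}$ belongs to both De Giorgi classes $DG^{+}(\Omega)$ and $DG^{-}(\Omega)$ of exponent $p$: on every pair of concentric balls $B_{\rho}\subset B_{R}\subset\subset\Omega$ and every level $k$, the truncations of $u^{\alpha}$ obey a Caccioppoli inequality on the level sets $A^{+}_{k,r}=\{x\in B_{r}:u^{\alpha}(x)>k\}$ (and symmetrically on $A^{-}_{k,r}=\{x\in B_{r}:u^{\alpha}(x)<k\}$), with a strictly subcritical power of the measure of the level set as the only lower-order term. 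Once this is established for each $\alpha=1,\dots,m$, the local boundedness and the local H\"older continuity of each component follow from the classical regularity theory for functions in a De Giorgi class.

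To produce the Caccioppoli inequality I would use minimality directly, which is essential because $G$ is merely Carath\'eodory and no Euler--Lagrange equation is available. Fix $\alpha$, the balls $B_{\rho}\subset B_{R}$, a level $k$, and a cut-off $\eta\in C^{\infty}_{c}(B_{R})$ with $\eta\equiv1$ on $B_{\rho}$ and $|\nabla\eta|\le C/(R-\rho)$; test the minimality of $u$ against the competitor $v$ agreeing with $u$ in every component except the $\alpha$-th, where $v^{\alpha}=u^{\alpha}-\eta^{p}(u^{\alpha}-k)^{+}$. Since $\eta$ has compact support, $v=u$ near $\partial B_{R}$, so $v$ is admissible; and since the leading term $\sum_{\beta}|\nabla u^{\beta}|^{p}$ is \emph{diagonal}, all contributions with $\beta\ne\alpha$ in the $p$-growth part cancel in $\mathcal F(u,B_{R})\le\mathcal F(v,B_{R})$, leaving only the $\alpha$-th. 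Inserting the lower bound for $G(x,u,|\nabla u|)$ on the left and the upper bound for $G(x,v,|\nabla v|)$ on the right, and observing that $v\equiv u$ off $A^{+}_{k,R}$, I would reduce everything to an integral inequality over $A^{+}_{k,R}$. A convexity estimate together with Young's inequality then absorbs the higher-order gradient contributions into the left-hand side (the term $L|\nabla v^{\alpha}|^{q}$ being admissible since $q<p$), yielding, with $A=A^{+}_{k,R}$,
\begin{equation*}
\int_{A^{+}_{k,\rho}}|\nabla u^{\alpha}|^{p}\,dx\le\frac{C}{(R-\rho)^{p}}\int_{A}(u^{\alpha}-k)^{p}\,dx+C\int_{A}\Big(a(x)+\sum_{\beta}|\nabla u^{\beta}|^{q}+\sum_{\beta}|u^{\beta}|^{q}\Big)\,dx .
\end{equation*}

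The remaining point is to check that the last integral is genuinely lower-order, i.e.\ bounded by $C\,|A|^{1-\frac{p}{n}+\varepsilon}$ for some $\varepsilon>0$, and here the quantitative hypotheses enter. Since $u$ is a \emph{fixed} $W^{1,p}$ function, H\"older's inequality gives $\int_{A}a\,dx\le\|a\|_{L^{\sigma}}|A|^{1-1/\sigma}$, $\int_{A}|\nabla u^{\beta}|^{q}\,dx\le\|\nabla u^{\beta}\|_{L^{p}}^{q}|A|^{1-q/p}$, and, via the Sobolev embedding $W^{1,p}\hookrightarrow L^{p^{*}}$ with $p^{*}=\frac{np}{n-p}$, $\int_{A}|u^{\beta}|^{q}\,dx\le\|u^{\beta}\|_{L^{p^{*}}}^{q}|A|^{1-q/p^{*}}$. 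The conditions $\sigma>\frac{n}{p}$ and $q<\frac{p^{2}}{n}$ are exactly what force $1-\frac1\sigma>1-\frac pn$ and $1-\frac qp>1-\frac pn$, while $q<\frac{p^{2}}{n}<\frac{p^{2}}{n-p}$ handles the $L^{p^{*}}$ term; taking $\varepsilon=\min\{\frac pn-\frac1\sigma,\ \frac pn-\frac qp\}>0$ closes the estimate and places $u^{\alpha}$ in $DG^{+}(\Omega)$ (constants depending on the data and on $\|u\|_{W^{1,p}}$). The same argument with $v^{\alpha}=u^{\alpha}+\eta^{p}(k-u^{\alpha})^{+}$ gives membership in $DG^{-}(\Omega)$.

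I expect the main obstacle to be precisely the vectorial coupling through $G$: because the two-sided growth bound uses the constant $1$ from below and $L>1$ from above, when only the $\alpha$-th component is perturbed the cross gradient terms $|\nabla u^{\beta}|^{q}$ with $\beta\ne\alpha$ do \emph{not} cancel and cannot be reabsorbed on the left, since they involve gradients one does not control pointwise in terms of $u^{\alpha}$. The resolution is to freeze them as data: they are integrable powers of the fixed minimizer and, thanks to $q<\frac{p^{2}}{n}$, contribute a strictly subcritical power of $|A^{+}_{k,R}|$. Keeping this exponent bookkeeping sharp enough to produce a genuine $\varepsilon>0$ (hence H\"older continuity rather than mere boundedness) is the delicate part; once $u^{\alpha}\in DG^{+}(\Omega)\cap DG^{-}(\Omega)$, the De Giorgi oscillation-decay theorem yields $u^{\alpha}\in C^{0,\beta_{0}}_{loc}(\Omega)$ for some $\beta_{0}\in(0,1)$, uniformly in $\alpha$, which completes the proof.
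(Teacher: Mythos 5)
Your proposal follows essentially the same route as the paper: perturb one component at a time with the competitor $v^{\alpha}=u^{\alpha}-\eta^{p}(u^{\alpha}-k)^{+}$, exploit the diagonal $p$-term so the other components cancel, use the two-sided growth bound on $G$ together with H\"older's and Young's inequalities to control the cross terms $|\nabla u^{\beta}|^{q}$, $|u^{\beta}|^{q}$ and $a(x)$ by $|A_{k,R}|^{1-\frac{p}{n}+\varepsilon}$ (the conditions $\sigma>\frac{n}{p}$ and $q<\frac{p^{2}}{n}$ entering exactly as you say), and conclude membership in $DG^{+}\cap DG^{-}$ and hence local boundedness and H\"older continuity from the classical De Giorgi class theory. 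The only points you compress are the hole-filling step (absorbing $L\int_{A_{k,s}\setminus A_{k,t}}|\nabla u^{1}|^{p}$ via the iteration Lemma rather than Young) and the explicit tracking of the $|k|^{p}$ contribution coming from the competitor's value, which is why the paper first proves $L^{\infty}_{loc}$ and then reruns the Caccioppoli estimate with $|u^{\alpha}|\le M^{\alpha}$; these are details of the same argument, not a different one.
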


We find the previous result nontrivial and quite surprising for three
reasons; the first reason lies in the existence of numerous examples of
non-regular vector problems, refer to [13, 19, 23]; the second reason is due
to the existence, olso, of many regularity results for vectorial problems
but all these results have strong hypotheses on the density function, which
is either very regular or in some sense convex, refer to [1-11, 14-18, 20,
21, 25, 30, 35-39, 42-46], differently from the works cited we only assume
that the density $G$ is a Caratheodory function; finally the third reason
consists in the fact that the proof is extremely elementary, in the sense
that it uses known techniques introduced for the scalar case, referring to
[12, 22, 24, 28, 29, 31, 32, 33, 40, 41].

Recently, in [7-11, 25, 33] new classes of vectorial problems, with regular
weak solutions, have been introduced. In particular in [7], Cupini, Focardi,
Leonetti and Mascolo introduced the following class of vectorial functionals%
\begin{equation}
\int\limits_{\Omega }f\left( x,\nabla u\right) \,dx  \label{1.3}
\end{equation}%
where $\Omega \subset 
\mathbb{R}
^{n},u:\Omega \rightarrow 
\mathbb{R}
^{m},n>1,m\geq 1$ and%
\begin{equation*}
f\left( x,\nabla u\right) =\sum\limits_{\alpha =1}^{m}F_{\alpha }\left(
x,\nabla u^{\alpha }\right) +G\left( x,\nabla u\right)
\end{equation*}%
where $F_{\alpha }\times \ R^{n\times m}\rightarrow R$ is a Carath\'{e}odory
function satisfying the following standard growth condition%
\begin{equation*}
k_{1}\left\vert \xi ^{\alpha }\right\vert ^{p}-a\left( x\right) \leq
F_{\alpha }\left( x,\xi ^{\alpha }\right) \leq k_{2}\left\vert \xi ^{\alpha
}\right\vert ^{p}+a\left( x\right)
\end{equation*}%
for every $\xi ^{\alpha }\in 
\mathbb{R}
^{n}$ and for almost every $x\in \Omega $ ,where $k_{1}$ and $k_{2}$ are two
real positive constants, $p>1$ and $a\in L_{loc}^{\sigma }\left( \Omega
\right) $ is a non negative function. In [7],Cupini, Focardi, Leonetti and
Mascolo analyze two different types of hypotheses on the $G$ function. They
started by assuming that $G:\Omega \times \ R^{n\times m}\rightarrow R$ is a
Carath\'{e}odory rank one convex function satisfying the following growth
condition%
\begin{equation*}
|G(x,\xi )|\leq k_{3}|\xi |^{q}+b(x)
\end{equation*}%
for every $\xi \in 
\mathbb{R}
^{n\times m}$, for almost every $x\in \Omega $, here $k_{3}$ is a real
positive constant, $1\leq q<p$ and $b\in L_{loc}^{\sigma }\left( \Omega
\right) $ is a nonnegative function. Moreover Cupini, Focardi, Leonetti and
Mascolo in [7] study the case where $n\geq m\geq 3$, and $G:\Omega \times \ 
\mathbb{R}
^{n\times m}\rightarrow R$ is a Carath\'{e}odory function defined as%
\begin{equation*}
G(x,\xi )=\sum\limits_{\alpha =1}^{m}G_{\alpha }\left( x,\left( adj_{m-1}\xi
\right) ^{\alpha }\right)
\end{equation*}%
here $G\alpha :\Omega \times \ 
\mathbb{R}
^{\frac{m!}{n!\left( n-m\right) !}}\rightarrow 
\mathbb{R}
$ is a Carath\'{e}odory convex function satisfying the following growth
conditions%
\begin{equation*}
0\leq G_{\alpha }\left( x,\left( adj_{m-1}\xi \right) ^{\alpha }\right) \leq
k_{4}|\left( adj_{m-1}\xi \right) ^{\alpha }|^{r}+b(x)
\end{equation*}%
for every $\xi \in 
\mathbb{R}
^{n\times m}$, for almost every $x\in \Omega $, here $k_{3}$ is a real
positive constant, $1\leq r<p$ and $b\in L_{loc}^{\sigma }\left( \Omega
\right) $ is a non negative function. In both cases, by imposing appropriate
hypotheses on the parameters $q$ and $r$ , Cupini, Focardi, Leonetti and
Mascolo proved that the localminimizers of the vectorial functional (\ref%
{1.3}) are locally H\"{o}lder continuous functions. In [25] the author with
M. Randolfi proved a regularity result for the minima of vector functionals
with anisotropic growths of the following type%
\begin{equation}
\int\limits_{\Omega }\sum\limits_{\alpha =1}^{m}F_{\alpha }\left( x,\nabla
u^{\alpha }\right) +G\left( x,\nabla u\right) \,dx  \label{1.4}
\end{equation}%
with%
\begin{equation}
\sum\limits_{\alpha =1}^{m}\Phi _{i,\alpha }\left( \left\vert \xi
_{i}^{\alpha }\right\vert \right) \leq F_{\alpha }\left( x,\xi ^{\alpha
}\right) \leq L\left[ \bar{B}_{\alpha }^{\beta _{\alpha }}\left( \left\vert
\xi ^{\alpha }\right\vert \right) +a\left( x\right) \right]  \label{1.5}
\end{equation}%
where $\Phi _{i,\alpha }$ are N functions belonging to the class $\triangle
_{2}^{m_{\alpha }}\cap \nabla _{2}^{r_{\alpha }}$, $\bar{B}_{\alpha }$\ is
the Sobolev function associated with $\Phi _{i,\alpha }$'s, $\beta _{\alpha
}\in \left( 0,1\right] $ and $a\in L_{loc}^{\sigma }\left( \Omega \right) $
is a non negative function; oppure with%
\begin{equation}
\sum\limits_{\alpha =1}^{m}\Phi _{i,\alpha }\left( \left\vert \xi
_{i}^{\alpha }\right\vert \right) -a\left( x\right) \leq F_{\alpha }\left(
x,\xi ^{\alpha }\right) \leq L_{1}\left[ \sum\limits_{\alpha =1}^{m}\Phi
_{i,\alpha }\left( \left\vert \xi _{i}^{\alpha }\right\vert \right) +a\left(
x\right) \right]  \label{1.6}
\end{equation}%
where $\Phi _{i,\alpha }$ are N-functions belonging to the class $\triangle
_{2}^{m_{\alpha }}\cap \nabla _{2}^{r_{\alpha }}$ and $a\in L_{loc}^{\sigma
}\left( \Omega \right) $ is a non negative function, moreover, appropriate
hypotheses are made on the density $G$, for more details we refer to [25].
In particular, using the techniques presented in [25, 26, 28, 29], the
author with M. Randolfi have shown that the minima of the functional (\ref%
{1.4}) are locally bounded functions in the case (\ref{1.5}) and locally
Holder continuous in the case (\ref{1.6}), we refer to [25] for more
details. In recent years a large number of articles have been written
dealing with the regularity of weak solutions of vactorial problems under
standard growths, general growths and anisotropic growths, refer to [1-11,
14-18, 20, 21, 25, 30 , 35-39, 42 -46]. Our result falls within the research
scope proposed in [7, 8, 25] but differs from the latter for the lack of
regularity and convexity hypotheses, which are replaced by a particular
density structure $G$.

For completeness we remember that if $\Omega $ is a open subset of $%
\mathbb{R}
^{n}$ and $u$\ is a Lebesgue measurable function then $L^{p}\left( \Omega
\right) $ is the set of the class of the Lebesgue measurable function such
that $\int\limits_{\Omega }\left\vert u\right\vert ^{p}\,dx<+\infty $ and $%
W^{1,p}\left( \Omega \right) $\ is the set of the function $u\in L^{p}\left(
\Omega \right) $ such that its waek derivate $\partial _{i}u\in L^{p}\left(
\Omega \right) $. The spaces $L^{p}\left( \Omega \right) $ and $%
W^{1,p}\left( \Omega \right) $ are Banach spaces with the respective norms%
\begin{equation*}
\left\Vert u\right\Vert _{L^{p}\left( \Omega \right) }=\left(
\int\limits_{\Omega }\left\vert u\right\vert ^{p}\,dx\right) ^{\frac{1}{p}}
\end{equation*}%
and%
\begin{equation*}
\left\Vert u\right\Vert _{W^{1,p}\left( \Omega \right) }=\left\Vert
u\right\Vert _{L^{p}\left( \Omega \right) }+\sum\limits_{i=1}^{n}\left\Vert
\partial _{i}u\right\Vert _{L^{p}\left( \Omega \right) }
\end{equation*}%
We say that the function $u:\Omega \subset 
\mathbb{R}
^{n}\rightarrow 
\mathbb{R}
^{m}$ belong in $W^{1,p}\left( \Omega ,%
\mathbb{R}
^{m}\right) $ if $u^{\alpha }\in W^{1,p}\left( \Omega \right) $ for every $%
\alpha =1,...,m$, where $u^{\alpha }$ is the $\alpha $ component of the
vector-valued function $u$; we end by remembering that $W^{1,p}\left( \Omega
,%
\mathbb{R}
^{m}\right) $ is a Banach space with the norm%
\begin{equation*}
\left\Vert u\right\Vert _{W^{1,p}\left( \Omega ,%
\mathbb{R}
^{n}\right) }=\sum\limits_{\alpha =1}^{m}\left\Vert u^{\alpha }\right\Vert
_{W^{1,p}\left( \Omega \right) }
\end{equation*}

\section{Caccioppoli inequalities}

To prove our regularity theorem, Theorem \ref{th1}, we need to introduce
some fundamental regularity results introduced in the 70s that generalize
the famous Theorem of De Giorgi-Nash-Moser, refer to [12, 40, 41], in
particular we will refer to the results of [22] as presented in [22] and
[24].

\begin{definition}
Let $\Omega \subset 
\mathbb{R}
^{n}$ be a bounded open set with $n\geq 2$ and $v:\Omega \rightarrow 
\mathbb{R}
$, we say that $v\in W_{loc}^{1,p}\left( \Omega \right) $ belong to the De
Giorgi class $DG^{+}\left( \Omega ,p,\lambda ,\lambda _{\ast },\chi
,\varepsilon ,R_{0},k_{0}\right) $ with $p>1$, $\lambda >0$, $\lambda _{\ast
}>0$, $\chi >0$, $\varepsilon >0$, $R_{0}>0$ and $k_{0}\geq 0$ if%
\begin{equation}
\int\limits_{A_{k,\varrho }}\left\vert \nabla v\right\vert ^{p}\,dx\leq 
\frac{\lambda }{\left( R-\varrho \right) ^{p}}\int\limits_{A_{k,R}}\left(
v-k\right) ^{p}\,dx+\lambda _{\ast }\left( \chi ^{p}+k^{p}R^{-n\varepsilon
}\right) \left\vert A_{k,R}\right\vert ^{1-\frac{p}{n}+\varepsilon }
\end{equation}%
for all $k\geq k_{0}\geq 0$ and for all pair of balls $B_{\varrho }\left(
x_{0}\right) \subset B_{R}\left( x_{0}\right) \subset \subset \Omega $ with $%
0<\varrho <R<R_{0}$ and $A_{k,s}=B_{s}\left( x_{0}\right) \cap \left\{
v>k\right\} $ with $s>0$.
\end{definition}

\begin{definition}
Let $\Omega \subset 
\mathbb{R}
^{n}$ be a bounded open set with $n\geq 2$ and $v:\Omega \rightarrow 
\mathbb{R}
$, we say that $v\in W_{loc}^{1,p}\left( \Omega \right) $ belong to the De
Giorgi class $DG^{-}\left( \Omega ,p,\lambda ,\lambda _{\ast },\chi
,\varepsilon ,R_{0},k_{0}\right) $ with $p>1$, $\lambda >0$, $\lambda _{\ast
}>0$, $\chi >0$ and $k_{0}\geq 0$ if%
\begin{equation}
\int\limits_{B_{k,\varrho }}\left\vert \nabla v\right\vert ^{p}\,dx\leq 
\frac{\lambda }{\left( R-\varrho \right) ^{p}}\int\limits_{B_{k,R}}\left(
k-v\right) ^{p}\,dx+\lambda _{\ast }\left( \chi ^{p}+\left\vert k\right\vert
^{p}R^{-n\varepsilon }\right) \left\vert B_{k,R}\right\vert ^{1-\frac{p}{n}%
+\varepsilon }
\end{equation}%
for all $k\leq -k_{0}\leq 0$ and for all pair of balls $B_{\varrho }\left(
x_{0}\right) \subset B_{R}\left( x_{0}\right) \subset \subset \Omega $ with $%
0<\varrho <R<R_{0}$ and $B_{k,s}=B_{s}\left( x_{0}\right) \cap \left\{
v<k\right\} $ with $s>0$.
\end{definition}

\begin{definition}
We set $DG\left( \Omega ,p,\lambda ,\lambda _{\ast },\chi ,\varepsilon
,R_{0},k_{0}\right) =DG^{+}\left( \Omega ,p,\lambda ,\lambda _{\ast },\chi
,\varepsilon ,R_{0},k_{0}\right) \cap DG^{-}\left( \Omega ,p,\lambda
,\lambda _{\ast },\chi ,\varepsilon ,R_{0},k_{0}\right) $. Moreover $u\in
DG\left( \Omega ,p,\lambda ,\lambda _{\ast },\chi ,\varepsilon ,R_{0}\right) 
$ if $u\in DG\left( \Omega ,p,\lambda ,\lambda _{\ast },\chi ,\varepsilon
,R_{0},k_{0}\right) $ for every $k_{0}\in 
\mathbb{R}
$. \ 
\end{definition}

\begin{theorem}
\label{th2} Let $v\in DG\left( \Omega ,p,\lambda ,\lambda _{\ast },\chi
,\varepsilon ,R_{0}\right) $ and $\tau \in (0,1)$, then there exists a
constant $C>1$ depending only upon the data and indipendent fo $v$ and $%
x_{0}\in \Omega $ such that for every pair of balls $B_{\tau \varrho }\left(
x_{0}\right) \subset B_{\varrho }\left( x_{0}\right) \subset \subset \Omega $
with $0<\varrho <R_{0}$ 
\begin{equation}
\left\Vert v\right\Vert _{L^{\infty }\left( B_{\tau \varrho }\left(
x_{0}\right) \right) }\leq \max \left\{ \lambda _{\ast }\varrho ^{\frac{%
n\varepsilon }{p}};\frac{C}{\left( 1-\tau \right) ^{\frac{n}{p}}}\left[ 
\frac{1}{\left\vert B_{\varrho }\left( x_{0}\right) \right\vert }%
\int\limits_{B_{\varrho }\left( x_{0}\right) }\left\vert v\right\vert
^{p}\,dx\right] ^{\frac{1}{p}}\right\}
\end{equation}
\end{theorem}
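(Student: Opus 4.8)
The plan is to run the classical De Giorgi level-truncation iteration, exactly as in the scalar theory (see [22, 24]), exploiting that membership in $DG\left(\Omega,p,\lambda,\lambda_{\ast},\chi,\varepsilon,R_{0}\right)$ already encodes the Caccioppoli-type inequality needed to start the machine. It suffices to bound $\operatorname*{ess\,sup}_{B_{\tau\varrho}}v$ from above using the $DG^{+}$ inequality; the lower bound follows by applying the same argument to $-v$, which belongs to $DG^{-}$ with the same parameters, and the two estimates combine into the stated control of $\left\Vert v\right\Vert_{L^{\infty}(B_{\tau\varrho})}$. Fix $x_{0}$ and write $B_{s}=B_{s}(x_{0})$. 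I would introduce the geometric sequence of radii $\varrho_{h}=\tau\varrho+(1-\tau)\varrho\,2^{-h}$, so that $\varrho_{0}=\varrho$ and $\varrho_{h}\downarrow\tau\varrho$ with $\varrho_{h}-\varrho_{h+1}=(1-\tau)\varrho\,2^{-(h+1)}$, together with an increasing sequence of levels $k_{h}=d\left(1-2^{-h}\right)\nearrow d$, where $d>0$ is a threshold to be fixed at the end. The driving quantity of the iteration is the excess
\[
J_{h}=\int_{A_{k_{h},\varrho_{h}}}\left(v-k_{h}\right)^{p}\,dx ,
\]
and the whole point is to show that a suitable choice of $d$ forces $J_{h}\to0$, which yields $v\leq d$ a.e.\ on $B_{\tau\varrho}$.

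To obtain the recursive inequality I would, at each step $h$, apply the Sobolev embedding $W^{1,p}\hookrightarrow L^{p^{\ast}}$ with $p^{\ast}=\frac{np}{n-p}$ to the truncated function $\left(v-k_{h+1}\right)_{+}$, cut off in the annulus between $\varrho_{h+1}$ and $\varrho_{h}$, then insert the $DG^{+}$ inequality on the corresponding nested balls to replace $\int\left\vert \nabla v\right\vert^{p}$ by $J_{h}$ and the inhomogeneous term, and finally apply H\"{o}lder's inequality, whose deficit exponent equals $1-\frac{p}{p^{\ast}}=\frac{p}{n}$, to recover a power of $\left\vert A_{k_{h+1},\varrho_{h+1}}\right\vert$. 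The measure of the super-level sets is in turn controlled by Chebyshev's inequality: since $v-k_{h}>d\,2^{-(h+1)}$ on $A_{k_{h+1},\varrho_{h+1}}$, one has $\left\vert A_{k_{h+1},\varrho_{h}}\right\vert\leq d^{-p}2^{(h+1)p}J_{h}$. Collecting these estimates and using $k_{h+1}\leq d$ to bound the level-dependent factor $k_{h+1}^{p}\varrho_{h}^{-n\varepsilon}$, I expect a recursion of the form
\[
J_{h+1}\leq C\,b^{h}\left[ \frac{d^{-p^{2}/n}}{(1-\tau)^{p}\varrho^{p}}\,J_{h}^{1+\frac{p}{n}}+\lambda_{\ast}\bigl(\chi^{p}+d^{p}\varrho^{-n\varepsilon}\bigr)d^{-p(1+\varepsilon)}\,J_{h}^{1+\varepsilon}\right] ,
\]
with $b=2^{p\left(1+\frac{p}{n}\right)}>1$. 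The crucial structural feature is that \emph{both} exponents $1+\frac{p}{n}$ and $1+\varepsilon$ are strictly larger than $1$; the second one is produced precisely by the surplus $\varepsilon$ in the exponent $1-\frac{p}{n}+\varepsilon$ of $\left\vert A_{k,R}\right\vert$ in the definition of the De Giorgi class, which is what makes the inhomogeneous term iterable. Setting $\delta=\min\left\{\frac{p}{n},\varepsilon\right\}>0$ and bounding $J_{h}^{1+p/n}$ and $J_{h}^{1+\varepsilon}$ by $J_{h}^{1+\delta}$ once the excess has entered $[0,1]$ (which a preliminary scaling arranges), this collapses to the single nonlinear recurrence $J_{h+1}\leq C\,b^{h}\,J_{h}^{1+\delta}$.

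At this point I would invoke the standard fast geometric convergence lemma for iterative sequences (as in [24]): if $J_{h+1}\leq C\,b^{h}J_{h}^{1+\delta}$ then $J_{h}\to0$ provided $J_{0}\leq C^{-1/\delta}b^{-1/\delta^{2}}$. Since $J_{0}=\int_{B_{\varrho}\cap\{v>0\}}v^{p}\,dx\leq\int_{B_{\varrho}}\left\vert v\right\vert^{p}\,dx$, the smallness condition becomes an inequality that can always be met by enlarging $d$, because the constant $C$ carries negative powers of $d$. Unwinding it, the two terms of the recursion impose two lower bounds on $d$: the energy term forces $d$ to be at least a fixed multiple of $\frac{1}{(1-\tau)^{n/p}}\bigl(\frac{1}{\left\vert B_{\varrho}\right\vert}\int_{B_{\varrho}}\left\vert v\right\vert^{p}\,dx\bigr)^{1/p}$ — the exponent $-\frac{n}{p}$ of $(1-\tau)$ being exactly $-\frac{1}{\delta}$ for $\delta=\frac{p}{n}$ applied to the factor $(1-\tau)^{-p}$ carried by $C$, together with the volume factor $\left\vert B_{\varrho}\right\vert\simeq\varrho^{n}$ — whereas the data term (which carries $\lambda_{\ast},\chi$ and has surplus exponent $\delta=\varepsilon$) forces $d$ to be at least a multiple of $\lambda_{\ast}\varrho^{n\varepsilon/p}$; indeed at $d=\lambda_{\ast}\varrho^{n\varepsilon/p}$ one has $d^{p}\varrho^{-n\varepsilon}=\lambda_{\ast}^{p}$, the scaling that calibrates the inhomogeneous contribution. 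Taking $d$ to be the maximum of the two candidate values makes the smallness hypothesis hold in either regime, whence $J_{h}\to0$ and $\operatorname*{ess\,sup}_{B_{\tau\varrho}}v\leq d$, with $d$ equal to the right-hand side of the claimed estimate.

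Finally, applying the whole argument to $-v$ yields the matching bound $\operatorname*{ess\,sup}_{B_{\tau\varrho}}(-v)\leq d$, and since $\left\Vert v\right\Vert_{L^{\infty}(B_{\tau\varrho})}=\max\{\operatorname*{ess\,sup}v,\operatorname*{ess\,sup}(-v)\}$ the two halves combine to give the theorem. I expect the main obstacle to be the bookkeeping of the inhomogeneous term rather than any conceptual difficulty: one must keep the level-dependent factor $k^{p}R^{-n\varepsilon}$ under control along the iteration (it stays bounded only because $k_{h}\leq d$), verify that its contribution genuinely carries the surplus exponent $\varepsilon$, and then disentangle the two competing scales so that the final threshold $d$ reduces to precisely the maximum of $\lambda_{\ast}\varrho^{n\varepsilon/p}$ and the normalised $L^{p}$-average with the sharp $(1-\tau)^{-n/p}$ dependence.
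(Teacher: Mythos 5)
Your proposal is correct and is essentially the paper's own argument: the paper does not prove Theorem~\ref{th2} at all, but simply cites Theorems 7.2 and 7.4 of Giusti's book [24], and the De Giorgi level/radius iteration you outline (Sobolev--Caccioppoli--Chebyshev recursion for $J_h$, fast geometric convergence, and the choice of $d$ as the maximum of the data scale $\lambda_{\ast}\varrho^{n\varepsilon/p}$ and the normalised $L^{p}$ average with the $(1-\tau)^{-n/p}$ factor) is precisely the proof behind those cited results. The only point worth noting is that your iteration would naturally also produce a contribution from the parameter $\chi$ on the right-hand side, which the paper's statement omits (presumably because $\chi$ is later taken equal to $1$); this is a defect of the statement as printed, not of your argument.
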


\begin{proof}
We refer to Theorem 7.2 and Theorem 7.4 of [24]
\end{proof}

For more details on De Giorgi's classes and for the proof of the Theorem \ref%
{th2} we refer to [12, 22, 24, 28, 29, 31, 32, 33, 35].

The fundamental result of this section is the following Theorem.

\begin{theorem}
\label{th3} Let $\Omega \subset 
\mathbb{R}
^{n}$ be a bounded open set with $n\geq 2$; if $u\in W^{1,p}\left( \Omega ,%
\mathbb{R}
^{m}\right) $, with $m\geq 1$, is a minimum of the functional 1.1 then $%
u^{\alpha }\in DG\left( \Omega ,p,\lambda ,\lambda _{\ast },\chi
,\varepsilon ,R_{0}\right) $ for all $\alpha =1,...,m$.
\end{theorem}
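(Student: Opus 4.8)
The plan is to verify directly the two differential inequalities defining $DG^{+}$ and $DG^{-}$ for each component $u^{\alpha}$, using only the local minimality of $u$. Fix $\alpha\in\{1,\dots,m\}$, a level $k\ge k_{0}\ge 0$, concentric balls $B_{\varrho}(x_{0})\subset B_{R}(x_{0})\subset\subset\Omega$ with $0<\varrho<R<R_{0}$, and a cut-off $\eta\in C_{c}^{\infty}(B_{R})$ with $\eta\equiv 1$ on $B_{\varrho}$, $0\le\eta\le 1$ and $|\nabla\eta|\le c/(R-\varrho)$. I would test minimality against the competitor $v$ that coincides with $u$ in every component except the $\alpha$-th, where I set $v^{\alpha}=u^{\alpha}-\eta(u^{\alpha}-k)^{+}$. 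Since $u$ and $v$ differ only on $A_{k,R}=B_{R}(x_{0})\cap\{u^{\alpha}>k\}$, and there $v^{\alpha}=(1-\eta)u^{\alpha}+\eta k$ lies between $k$ and $u^{\alpha}$, every estimate localises to $A_{k,R}$ and satisfies $|v^{\alpha}|\le|u^{\alpha}|$.

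The decisive structural feature is that the principal part $\sum_{\beta}|\nabla u^{\beta}|^{p}$ is decoupled, so in the comparison the terms with $\beta\ne\alpha$ cancel and only $|\nabla u^{\alpha}|^{p}$ survives on the left. Applying the lower bound of $H.1$ to $G(x,u,|\nabla u|)$ and the upper bound to $G(x,v,|\nabla v|)$, minimality produces an inequality of the form $\int_{A_{k,R}}|\nabla u^{\alpha}|^{p}\le\int_{A_{k,R}}|\nabla v^{\alpha}|^{p}+(\text{$q$-growth terms})+(\text{$a$-terms})$. Using $\nabla v^{\alpha}=(1-\eta)\nabla u^{\alpha}-(u^{\alpha}-k)\nabla\eta$, which vanishes on $A_{k,\varrho}$, together with the elementary convexity estimate for $|\cdot|^{p}$, the standard hole-filling and a standard iteration lemma yield the leading term $\frac{\lambda}{(R-\varrho)^{p}}\int_{A_{k,R}}(u^{\alpha}-k)^{p}$. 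Since $q<p^{2}/n<p$, a pointwise Young inequality lets me absorb into the left-hand side the part of the $q$-gradient terms carrying $|\nabla u^{\alpha}|^{q}$.

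The main obstacle is the genuinely vectorial coupling created by $G$: the lower bound contributes $-\sum_{\beta}|\nabla u^{\beta}|^{q}$ while the upper bound contributes $+L\sum_{\beta}|\nabla v^{\beta}|^{q}$, and for $\beta\ne\alpha$ these do not cancel but leave $(L-1)|\nabla u^{\beta}|^{q}$, i.e. the gradients of the \emph{other} components, which cannot be absorbed on the left (the left controls only $u^{\alpha}$). The resolution is to estimate them by H\"older's inequality, $\int_{A_{k,R}}|\nabla u^{\beta}|^{q}\le(\int_{\Omega}|\nabla u^{\beta}|^{p})^{q/p}|A_{k,R}|^{1-q/p}$, where $u\in W^{1,p}(\Omega,\mathbb{R}^{m})$ provides a finite a priori bound for $\int_{\Omega}|\nabla u^{\beta}|^{p}$. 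This is exactly the place where $q<p^{2}/n$ is essential: it guarantees $1-q/p\ge 1-\frac{p}{n}+\varepsilon$ for some $\varepsilon>0$, so that $|A_{k,R}|^{1-q/p}$ is dominated by $|A_{k,R}|^{1-\frac{p}{n}+\varepsilon}$ and the whole term is absorbed into the De Giorgi datum $\lambda_{*}(\chi^{p}+k^{p}R^{-n\varepsilon})|A_{k,R}|^{1-\frac{p}{n}+\varepsilon}$.

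It remains to dispose of the lower-order quantities by the same mechanism, fixing $\varepsilon>0$ small enough to meet every exponent constraint simultaneously. The values $\sum_{\beta}|u^{\beta}|^{q}$ and $\sum_{\beta}|v^{\beta}|^{q}$ are controlled through the Sobolev embedding $W^{1,p}\hookrightarrow L^{p^{*}}$ and H\"older for $\beta\ne\alpha$, whereas for $\beta=\alpha$ the splitting $|u^{\alpha}|^{q}\le 2^{q-1}((u^{\alpha}-k)^{q}+k^{q})$ separates a piece absorbed into the leading $(u^{\alpha}-k)^{p}$ integral from the piece $k^{q}|A_{k,R}|$, which—because $q<p$ and $R<R_{0}$—is dominated by $\lambda_{*}(\chi^{p}+k^{p}R^{-n\varepsilon})|A_{k,R}|^{1-\frac{p}{n}+\varepsilon}$ after enlarging $\chi$. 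The datum $a\in L^{\sigma}$ with $\sigma>n/p$ is handled by H\"older, giving $\int_{A_{k,R}}a\le\|a\|_{L^{\sigma}}|A_{k,R}|^{1-1/\sigma}$ with $1-1/\sigma\ge 1-\frac{p}{n}+\varepsilon$. Collecting the estimates yields the $DG^{+}$ inequality; running the identical argument with the competitor $v^{\alpha}=u^{\alpha}+\eta(k-u^{\alpha})^{+}$ for $k\le -k_{0}\le 0$, and using that the bounds in $H.1$ depend on $s^{\alpha}$ and $\xi^{\alpha}$ only through $|s^{\alpha}|$ and $|\xi^{\alpha}|$, gives $DG^{-}$. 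Hence $u^{\alpha}\in DG(\Omega,p,\lambda,\lambda_{*},\chi,\varepsilon,R_{0})$ for every $\alpha=1,\dots,m$.
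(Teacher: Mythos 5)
Your proposal is correct and follows essentially the same route as the paper: test minimality with a one-component truncation competitor, exploit the decoupling of the $p$-energy, apply the lower bound of H.1 to $u$ and the upper bound to the competitor, control the uncancelled cross terms $\sum_{\beta\neq\alpha}|\nabla u^{\beta}|^{q}$ and $\sum_{\beta}|u^{\beta}|^{q}$ by H\"older against the global $W^{1,p}$ norm using $q<p^{2}/n$ (and $\sigma>n/p$ for the $a$-term) to reach the exponent $1-\frac{p}{n}+\varepsilon$, then hole-fill and iterate. The only cosmetic differences are the power of the cutoff ($\eta$ versus $\eta^{p}$) and that the paper obtains $DG^{-}$ by observing $-u$ minimizes the reflected functional rather than rerunning the argument at negative levels.
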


\section{Lemmata}

To complete the proof of Theorem \ref{th1} it remains to prove Theorem \ref%
{th3}. Before giving the proof of Theorem \ref{th3}, for completeness we
introduce a list of results that we will use during the proof.

\begin{lemma}[Young Inequality]
Let $\varepsilon >0$, $a,b>0$ and $1<p,q<+\infty $ with $\frac{1}{p}+\frac{1%
}{q}=1$\ then it follows%
\begin{equation}  \label{4.1}
ab\leq \varepsilon \frac{a^{p}}{p}+\frac{b^{q}}{\varepsilon ^{\frac{q}{p}}q}
\end{equation}
\end{lemma}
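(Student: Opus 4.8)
The plan is to obtain the weighted inequality \eqref{4.1} from the classical unweighted Young inequality by a one-line rescaling, so the first step is to settle the case $\varepsilon = 1$, that is $ab \le \frac{a^{p}}{p} + \frac{b^{q}}{q}$. For this I would use the concavity of the logarithm on $(0,+\infty)$: since $\frac{1}{p} + \frac{1}{q} = 1$, the weights $\frac{1}{p}$ and $\frac{1}{q}$ are nonnegative and sum to $1$, so Jensen's inequality applied to the two points $a^{p}$ and $b^{q}$ gives
\[
\ln\!\left( \frac{1}{p}a^{p} + \frac{1}{q}b^{q} \right) \ge \frac{1}{p}\ln\!\left(a^{p}\right) + \frac{1}{q}\ln\!\left(b^{q}\right) = \ln a + \ln b = \ln(ab).
\]
As $\ln$ is increasing, exponentiating yields the unweighted bound. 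The hypothesis $a,b>0$ is exactly what lets me apply the logarithm, and the cases with $a=0$ or $b=0$ are trivial.

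Second, I would insert the weight by the substitution $A := \varepsilon^{1/p}a$ and $B := \varepsilon^{-1/p}b$, which is arranged so that $AB = ab$. Applying the unweighted inequality to the pair $(A,B)$ gives
\[
ab = AB \le \frac{A^{p}}{p} + \frac{B^{q}}{q} = \varepsilon\,\frac{a^{p}}{p} + \frac{b^{q}}{\varepsilon^{q/p}\,q},
\]
where the last equality uses $A^{p} = \varepsilon a^{p}$ and $B^{q} = \varepsilon^{-q/p}b^{q}$. This is precisely \eqref{4.1}.

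I do not anticipate a real obstacle, since this is a standard tool; the only thing to watch is the exponent bookkeeping, namely that the rescaling produces the exact factor $\varepsilon^{q/p}$ in the denominator and no other power of $\varepsilon$. As a self-contained alternative that avoids the logarithm, I could fix $b$ and minimize $g(a) := \varepsilon\,\frac{a^{p}}{p} + \frac{b^{q}}{\varepsilon^{q/p}\,q} - ab$ over $a>0$. The stationarity condition $g'(a) = \varepsilon a^{p-1} - b = 0$ singles out the unique critical point, and convexity of $a \mapsto a^{p}$ makes it a global minimum; substituting it back and using $\frac{q}{p} = \frac{1}{p-1}$ together with $\frac{1}{p} + \frac{1}{q} = 1$ shows $\min_{a>0} g(a) = 0$, whence $g(a) \ge 0$ for all $a>0$, which is again \eqref{4.1}.
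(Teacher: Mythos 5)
Your proof is correct: the Jensen/concavity argument gives the unweighted inequality, and the rescaling $A=\varepsilon^{1/p}a$, $B=\varepsilon^{-1/p}b$ does produce exactly $A^{p}=\varepsilon a^{p}$ and $B^{q}=\varepsilon^{-q/p}b^{q}$, so the exponent bookkeeping matches \eqref{4.1} precisely; the alternative minimization argument also checks out, since at the critical point $a_{\ast}=(b/\varepsilon)^{1/(p-1)}$ one has $g(a_{\ast})=\varepsilon^{-q/p}b^{q}\left(\frac{1}{p}+\frac{1}{q}-1\right)=0$. Note that the paper itself states this lemma without any proof, treating it as a standard tool collected in the Lemmata section, so there is no in-paper argument to compare against; your write-up simply supplies the standard derivation that the author left implicit.
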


\begin{lemma}[H\"{o}lder Inequality]
Assume $1\leq p,q\leq +\infty $ with $\frac{1}{p}+\frac{1}{q}=1$\ then if $%
u\in L^{p}\left( \Omega \right) $\ and $v\in L^{p}\left( \Omega \right) $\
it follows%
\begin{equation}  \label{4.2}
\int\limits_{\Omega }\left\vert uv\right\vert \,dx\leq \left(
\int\limits_{\Omega }\left\vert u\right\vert ^{p}\,dx\right) ^{\frac{1}{p}%
}\left( \int\limits_{\Omega }\left\vert v\right\vert ^{q}\,dx\right) ^{\frac{%
1}{q}}
\end{equation}
\end{lemma}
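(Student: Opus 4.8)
The plan is to deduce the inequality \eqref{4.2} directly from Young's inequality \eqref{4.1} by a normalization argument, after first disposing of the degenerate and endpoint cases to which \eqref{4.1} does not literally apply. Throughout, $v$ is taken in $L^{q}\left( \Omega \right) $ (the conjugate exponent), which is the hypothesis that makes the right-hand side of \eqref{4.2} finite.

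First I would treat the trivial cases. If $p=1$ (hence $q=+\infty $) or $p=+\infty $ (hence $q=1$), then \eqref{4.2} reduces to the elementary estimate $\int_{\Omega }\left\vert uv\right\vert \,dx\leq \left\Vert u\right\Vert _{L^{1}\left( \Omega \right) }\left\Vert v\right\Vert _{L^{\infty }\left( \Omega \right) }$, which follows at once from $\left\vert v\left( x\right) \right\vert \leq \left\Vert v\right\Vert _{L^{\infty }\left( \Omega \right) }$ for $\mathcal{L}^{n}$ a.e. $x\in \Omega $ (and symmetrically in the other endpoint). In the main range $1<p,q<+\infty $, if $\left\Vert u\right\Vert _{L^{p}\left( \Omega \right) }=0$ or $\left\Vert v\right\Vert _{L^{q}\left( \Omega \right) }=0$ then $u=0$ or $v=0$ for $\mathcal{L}^{n}$ a.e. $x\in \Omega $ and both sides of \eqref{4.2} vanish, while if either norm equals $+\infty $ the right-hand side is $+\infty $ and there is nothing to prove. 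Hence I may assume $0<\left\Vert u\right\Vert _{L^{p}\left( \Omega \right) }<+\infty $ and $0<\left\Vert v\right\Vert _{L^{q}\left( \Omega \right) }<+\infty $.

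Next I would normalize. Set $A=\left\Vert u\right\Vert _{L^{p}\left( \Omega \right) }$ and $B=\left\Vert v\right\Vert _{L^{q}\left( \Omega \right) }$, and apply Young's inequality \eqref{4.1} with $\varepsilon =1$, $a=\left\vert u\left( x\right) \right\vert /A$ and $b=\left\vert v\left( x\right) \right\vert /B$ at $\mathcal{L}^{n}$ a.e. $x\in \Omega $, obtaining
\begin{equation*}
\frac{\left\vert u\left( x\right) v\left( x\right) \right\vert }{AB}\leq \frac{1}{p}\,\frac{\left\vert u\left( x\right) \right\vert ^{p}}{A^{p}}+\frac{1}{q}\,\frac{\left\vert v\left( x\right) \right\vert ^{q}}{B^{q}}.
\end{equation*}
Integrating this pointwise bound over $\Omega $, using $A^{p}=\int_{\Omega }\left\vert u\right\vert ^{p}\,dx$, $B^{q}=\int_{\Omega }\left\vert v\right\vert ^{q}\,dx$, and $\frac{1}{p}+\frac{1}{q}=1$, gives
\begin{equation*}
\frac{1}{AB}\int_{\Omega }\left\vert uv\right\vert \,dx\leq \frac{1}{p}+\frac{1}{q}=1,
\end{equation*}
and multiplying through by $AB=\left\Vert u\right\Vert _{L^{p}\left( \Omega \right) }\left\Vert v\right\Vert _{L^{q}\left( \Omega \right) }$ yields precisely \eqref{4.2}.

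There is no genuine obstacle here, since the result is classical; the only point requiring attention is the bookkeeping of the special cases, because Young's inequality \eqref{4.1} is stated for $1<p,q<+\infty $ with strictly positive $a,b$. Accordingly, the boundary exponents $p\in \{1,+\infty \}$ and the possibility that $u$ or $v$ vanishes on a set of positive measure (or has infinite norm) must be handled by the direct arguments indicated in the first paragraph rather than by \eqref{4.1} itself.
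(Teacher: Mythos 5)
Your proof is correct, but there is nothing in the paper to compare it against: the paper states this lemma without proof, listing it among the preliminary Lemmata as a classical tool (with the reader implicitly deferred to Giusti [24] for such background material). Your argument is the standard normalization proof: handle the endpoint exponents $p\in \left\{ 1,+\infty \right\} $ and the degenerate cases $\left\Vert u\right\Vert _{L^{p}\left( \Omega \right) }\left\Vert v\right\Vert _{L^{q}\left( \Omega \right) }\in \left\{ 0,+\infty \right\} $ directly, then apply the pointwise Young inequality \ref{4.1} with $\varepsilon =1$ to the normalized functions and integrate; every step is sound, and the bookkeeping of the special cases is exactly the care the statement of \ref{4.1} requires. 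You also rightly repair the typo in the hypothesis: it should read $v\in L^{q}\left( \Omega \right) $ rather than $v\in L^{p}\left( \Omega \right) $, since the conjugate-exponent integrability is what makes the right-hand side of \ref{4.2} finite, and this is indeed how the lemma is used later in the paper (e.g.\ with the pair $\left( \frac{p}{q},\frac{p}{p-q}\right) $ and with $\left( \sigma ,\frac{\sigma }{\sigma -1}\right) $).
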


\begin{lemma}
\label{lemma3} Let $Z\left( t\right) $ be a nonnegative and bounded function
on the set $\left[ \varrho ,R\right] $; if for every $\varrho \leq t<s\leq R$
we get%
\begin{equation*}
Z\left( t\right) \leq \theta Z\left( s\right) +\frac{A}{\left( s-t\right)
^{\lambda }}+\frac{B}{\left( s-t\right) ^{\mu }}+C
\end{equation*}%
where $A,B,C\geq 0$, $\lambda >\mu >0$ and $0\leq \theta <1$ then it follows%
\begin{equation*}
Z\left( \varrho \right) \leq C\left( \theta ,\lambda \right) \left( \frac{A}{%
\left( R-\varrho \right) ^{\lambda }}+\frac{B}{\left( R-\varrho \right)
^{\mu }}+C\right)
\end{equation*}%
where $C\left( \theta ,\lambda \right) >0$ is a real constant depending only
on $\theta $ and $\lambda $.
\end{lemma}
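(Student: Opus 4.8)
The plan is to prove the estimate by iterating the hypothesis along a geometric sequence of radii converging to $\varrho$ from above, a standard hole-filling scheme. Fix $\tau\in(0,1)$, to be chosen at the end, and set $t_{i}=R-(R-\varrho)\tau^{i}$ for $i=0,1,2,\dots$, so that $t_{0}=\varrho$, the sequence $(t_{i})$ increases to $R$, and $t_{i+1}-t_{i}=(R-\varrho)(1-\tau)\tau^{i}$. Each consecutive pair satisfies $\varrho\le t_{i}<t_{i+1}\le R$, so the hypothesis applies with $t=t_{i}$ and $s=t_{i+1}$ and gives
\[
Z(t_{i})\le\theta\,Z(t_{i+1})+\frac{A}{[(R-\varrho)(1-\tau)]^{\lambda}}\,\tau^{-i\lambda}+\frac{B}{[(R-\varrho)(1-\tau)]^{\mu}}\,\tau^{-i\mu}+C .
\]

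First I would iterate this inequality. Substituting the bound for $Z(t_{i+1})$ into that for $Z(t_{i})$ repeatedly, an easy induction yields, for every $N\ge 1$,
\[
Z(\varrho)=Z(t_{0})\le\theta^{N}Z(t_{N})+\sum_{i=0}^{N-1}\theta^{i}\left\{\frac{A\,\tau^{-i\lambda}}{[(R-\varrho)(1-\tau)]^{\lambda}}+\frac{B\,\tau^{-i\mu}}{[(R-\varrho)(1-\tau)]^{\mu}}+C\right\}.
\]
Since $Z$ is bounded on $[\varrho,R]$ and $0\le\theta<1$, the remainder $\theta^{N}Z(t_{N})$ tends to $0$ as $N\to\infty$, so it suffices to pass to the limit $N\to\infty$ and control the resulting series.

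The crux is the choice of $\tau$: the factors $\theta^{i}\tau^{-i\lambda}=(\theta\tau^{-\lambda})^{i}$ and $\theta^{i}\tau^{-i\mu}=(\theta\tau^{-\mu})^{i}$ must have ratio strictly below $1$ so that the two geometric series converge, and the decay $\theta^{i}$ must beat the growth $\tau^{-i\lambda}$ of the largest coefficient. Because $\lambda>\mu>0$ and $0<\tau<1$ give $\tau^{-\lambda}\ge\tau^{-\mu}$, the binding requirement is $\theta\tau^{-\lambda}<1$, i.e. $\tau>\theta^{1/\lambda}$; since $\theta<1$ one may fix any such $\tau\in(\theta^{1/\lambda},1)$, a choice depending only on $\theta$ and $\lambda$. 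Summing the geometric series then gives
\[
Z(\varrho)\le\frac{(1-\tau)^{-\lambda}}{1-\theta\tau^{-\lambda}}\frac{A}{(R-\varrho)^{\lambda}}+\frac{(1-\tau)^{-\mu}}{1-\theta\tau^{-\mu}}\frac{B}{(R-\varrho)^{\mu}}+\frac{C}{1-\theta}.
\]

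Finally, since $0<\tau<1$ and $\mu<\lambda$ imply $(1-\tau)^{-\mu}\le(1-\tau)^{-\lambda}$ and $\theta\tau^{-\mu}\le\theta\tau^{-\lambda}$, the coefficient of the $B$-term is dominated by that of the $A$-term, and the same comparison shows $\tfrac{1}{1-\theta}$ is dominated as well; hence all three coefficients are bounded by the single constant $C(\theta,\lambda)=(1-\tau)^{-\lambda}(1-\theta\tau^{-\lambda})^{-1}$, which depends only on $\theta$ and $\lambda$, yielding the asserted conclusion. I expect the only genuine obstacle to be this balancing of the geometric factors through $\tau$; once $\theta<1$ is used to secure $\theta\tau^{-\lambda}<1$, the remainder of the argument is the summation of convergent series together with the elementary monotonicity comparisons that collapse the constant into a function of $\theta$ and $\lambda$ alone.
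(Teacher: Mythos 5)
Your proof is correct: the geometric subdivision $t_i=R-(R-\varrho)\tau^i$, the iteration, the choice $\tau\in(\theta^{1/\lambda},1)$ making $\theta\tau^{-\lambda}<1$, and the final monotonicity comparisons collapsing all three coefficients into $C(\theta,\lambda)=(1-\tau)^{-\lambda}(1-\theta\tau^{-\lambda})^{-1}$ are all sound. The paper states this lemma without proof (it is the classical hole-filling iteration lemma, cf.\ Giusti [24], Lemma 6.1), and your argument is exactly that standard proof, so there is nothing to compare beyond noting the agreement.
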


\begin{theorem}[Sobolev Inequlity]
Let $\Omega $ be a open subset of $%
\mathbb{R}
^{N}$ if $u\in W_{0}^{1,p}\left( \Omega \right) $ with $1\leq p<N$ there
exists a real positive constant $C_{SN}$, depending only on $p$ and $N$,
such that%
\begin{equation}
\left\Vert u\right\Vert _{L^{p^{\ast }}\left( \Omega \right) }\leq
C_{SN}\left\Vert \nabla u\right\Vert _{L^{p}\left( \Omega \right) }
\end{equation}%
where $p^{\ast }=\frac{Np}{N-p}$.
\end{theorem}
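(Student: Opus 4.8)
The plan is to establish the inequality first for smooth functions with compact support and then to pass to the general case by density, since $C_{c}^{\infty }\left( \Omega \right) $ is dense in $W_{0}^{1,p}\left( \Omega \right) $ by the very definition of this space. Accordingly, it suffices to produce a constant $C_{SN}$, depending only on $p$ and $N$, with $\left\Vert u\right\Vert _{L^{p^{\ast }}\left( \mathbb{R}^{N}\right) }\leq C_{SN}\left\Vert \nabla u\right\Vert _{L^{p}\left( \mathbb{R}^{N}\right) }$ for every $u\in C_{c}^{\infty }\left( \mathbb{R}^{N}\right) $; extending $u$ by zero outside $\Omega $ reduces the statement on $\Omega $ to the statement on all of $\mathbb{R}^{N}$, and the two norms are unchanged under this extension.

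First I would treat the borderline case $p=1$, where $p^{\ast }=\frac{N}{N-1}$. For $u\in C_{c}^{\infty }\left( \mathbb{R}^{N}\right) $ and each index $i=1,\dots ,N$, the fundamental theorem of calculus along the $i$-th coordinate direction gives $\left\vert u\left( x\right) \right\vert \leq \int_{-\infty }^{+\infty }\left\vert \partial _{i}u\right\vert \,dt$. Multiplying these $N$ bounds and raising to the power $\frac{1}{N-1}$ yields a pointwise estimate for $\left\vert u\left( x\right) \right\vert ^{\frac{N}{N-1}}$ as a product of $N$ factors, the $i$-th of which is independent of the variable $x_{i}$. I would then integrate this inequality successively in $x_{1},x_{2},\dots ,x_{N}$, applying at each stage the generalized H\"{o}lder inequality (obtained by iterating (\ref{4.2})) with $N-1$ exponents all equal to $N-1$ so as to control the factors that still depend on the variable being integrated. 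After the $N$-th integration one reaches $\left\Vert u\right\Vert _{L^{\frac{N}{N-1}}\left( \mathbb{R}^{N}\right) }\leq \prod_{i=1}^{N}\left( \int_{\mathbb{R}^{N}}\left\vert \partial _{i}u\right\vert \,dx\right) ^{\frac{1}{N}}$, and the elementary arithmetic--geometric mean inequality bounds the right-hand side by a dimensional multiple of $\left\Vert \nabla u\right\Vert _{L^{1}\left( \mathbb{R}^{N}\right) }$, settling the case $p=1$.

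For $1<p<N$ I would apply the case just proved not to $u$ but to $v=\left\vert u\right\vert ^{\gamma }$ with an exponent $\gamma >1$ chosen below. Since $\nabla v=\gamma \left\vert u\right\vert ^{\gamma -1}\nabla u$ almost everywhere, the $p=1$ inequality reads $\left\Vert \left\vert u\right\vert ^{\gamma }\right\Vert _{L^{\frac{N}{N-1}}}\leq \gamma \int \left\vert u\right\vert ^{\gamma -1}\left\vert \nabla u\right\vert \,dx$, and I would estimate the right-hand side by H\"{o}lder's inequality (\ref{4.2}) with exponents $p$ and $p^{\prime }=\frac{p}{p-1}$. The critical point is to select $\gamma $ so that the power of $\left\vert u\right\vert $ generated on the left matches the one arising on the right: the equation $\gamma \frac{N}{N-1}=\left( \gamma -1\right) \frac{p}{p-1}$ forces $\gamma =\frac{p\left( N-1\right) }{N-p}$, and with this value both sides carry the exponent $p^{\ast }=\frac{Np}{N-p}$. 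Dividing through by the common factor $\left( \int \left\vert u\right\vert ^{p^{\ast }}\right) ^{\frac{p-1}{p}}$, which is finite and nonzero precisely because $u$ is smooth with compact support, isolates $\left\Vert u\right\Vert _{L^{p^{\ast }}}$ and yields the claim with $C_{SN}=\frac{p\left( N-1\right) }{N-p}$ times the dimensional constant inherited from the $p=1$ case, hence depending only on $p$ and $N$.

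The step I expect to demand the most care is the iterated integration in the case $p=1$: keeping track of which factors depend on the variable of integration and correctly invoking the generalized H\"{o}lder inequality at each of the $N$ stages is where the bookkeeping must be handled precisely, whereas the reduction for general $p$ is a clean algebraic manipulation once $\gamma $ is fixed. Finally, the density step requires only that both sides are continuous with respect to the $W^{1,p}$ norm, which is immediate by applying the inequality already proved to differences of approximating functions in $C_{c}^{\infty }\left( \Omega \right) $ and passing to the limit.
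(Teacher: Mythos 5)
Your proposal is correct: the paper itself gives no proof of this statement, deferring to reference [24] (Giusti, \emph{Direct Methods in the Calculus of Variations}), and your argument is exactly the classical Gagliardo--Nirenberg--Sobolev proof found there --- the $p=1$ case by the fundamental theorem of calculus plus iterated H\"{o}lder, the case $1<p<N$ by applying it to $\left\vert u\right\vert ^{\gamma }$ with $\gamma =\frac{p\left( N-1\right) }{N-p}$, and density of $C_{c}^{\infty }\left( \Omega \right) $ in $W_{0}^{1,p}\left( \Omega \right) $. The only point worth a remark is that $\left\vert u\right\vert ^{\gamma }$ is merely $C^{1}$ with compact support rather than smooth, so you should note that the $p=1$ inequality applies to such functions (the same fundamental-theorem argument goes through verbatim); with that observation your computation of $\gamma $, the matching exponent $p^{\ast }$, and the division by $\left( \int \left\vert u\right\vert ^{p^{\ast }}\,dx\right) ^{\frac{p-1}{p}}$ are all in order.
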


\begin{theorem}
(Rellich-Sobolev Embendding Theorem) Let $\Omega $ be a open bounded subset
of $%
\mathbb{R}
^{N}$ with lipschitz bondary then if $u\in W^{1,p}\left( \Omega \right) $
with $1\leq p<N$ there exists a real positive constant $C_{ES}$, depending
only on $p$ and $N$, such that%
\begin{equation}
\left\Vert u\right\Vert _{L^{p^{\ast }}\left( \Omega \right) }\leq
C_{ES}\left\Vert u\right\Vert _{W^{1,p}\left( \Omega \right) }
\end{equation}%
where $p^{\ast }=\frac{Np}{N-p}$.
\end{theorem}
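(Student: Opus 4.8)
The plan is to reduce the embedding on the bounded Lipschitz domain $\Omega $ to the Sobolev inequality for $W_{0}^{1,p}$ stated just above, by means of an extension argument. The structural ingredient I would invoke is that a bounded domain with Lipschitz boundary admits a bounded linear extension operator (the Calder\'on--Stein extension theorem): there exists a linear map $E:W^{1,p}\left( \Omega \right) \rightarrow W^{1,p}\left( \mathbb{R}^{N}\right) $ with $\left( Eu\right) |_{\Omega }=u$ and $\left\Vert Eu\right\Vert _{W^{1,p}\left( \mathbb{R}^{N}\right) }\leq C_{1}\left\Vert u\right\Vert _{W^{1,p}\left( \Omega \right) }$, where $C_{1}$ depends on $p$, $N$ and the Lipschitz character of $\partial \Omega $.

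First I would fix a ball $B_{R}$ with $\Omega \subset \subset B_{R}$ (possible since $\Omega $ is bounded) and a cutoff $\eta \in C_{c}^{\infty }\left( B_{R}\right) $ with $0\leq \eta \leq 1$ and $\eta \equiv 1$ on $\Omega $, chosen once and for all so that $\left\Vert \nabla \eta \right\Vert _{L^{\infty }}$ is a fixed constant. Setting $w:=\eta \,Eu$, the product rule gives $\nabla w=\eta \nabla \left( Eu\right) +\left( Eu\right) \nabla \eta $, and since $\mathrm{supp}\,w\subset B_{R}$ we obtain $w\in W_{0}^{1,p}\left( B_{R}\right) $ together with the bound
\[
\left\Vert w\right\Vert _{W^{1,p}\left( B_{R}\right) }\leq C_{2}\left\Vert Eu\right\Vert _{W^{1,p}\left( \mathbb{R}^{N}\right) }\leq C_{1}C_{2}\left\Vert u\right\Vert _{W^{1,p}\left( \Omega \right) },
\]
where $C_{2}$ depends only on $\left\Vert \nabla \eta \right\Vert _{L^{\infty }}$, hence on $R$, $p$ and $N$.

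Next I would apply the Sobolev inequality stated above to $w\in W_{0}^{1,p}\left( B_{R}\right) $, which yields
\[
\left\Vert w\right\Vert _{L^{p^{\ast }}\left( B_{R}\right) }\leq C_{SN}\left\Vert \nabla w\right\Vert _{L^{p}\left( B_{R}\right) }\leq C_{SN}\left\Vert w\right\Vert _{W^{1,p}\left( B_{R}\right) }.
\]
Finally, since $w=\eta \,Eu=u$ on $\Omega $, the restriction estimate $\left\Vert u\right\Vert _{L^{p^{\ast }}\left( \Omega \right) }\leq \left\Vert w\right\Vert _{L^{p^{\ast }}\left( B_{R}\right) }$ holds, and chaining the three displayed inequalities produces $\left\Vert u\right\Vert _{L^{p^{\ast }}\left( \Omega \right) }\leq C_{ES}\left\Vert u\right\Vert _{W^{1,p}\left( \Omega \right) }$ with $C_{ES}=C_{SN}C_{1}C_{2}$, as claimed.

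The main obstacle is the existence of the bounded extension operator $E$: this is precisely where the Lipschitz regularity of $\partial \Omega $ enters, and its construction (flattening the boundary in local charts, reflecting across the flattened boundary, and gluing by a partition of unity) is the only nonelementary step, everything after it being routine. I would also remark that, strictly speaking, $C_{ES}$ depends on $\Omega $ through the Lipschitz character of $\partial \Omega $ and the radius $R$, so the dependence \textquotedblleft only on $p$ and $N$\textquotedblright\ asserted in the statement should be understood for a fixed domain $\Omega $.
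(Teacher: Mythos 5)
Your proof is correct: the paper itself offers no proof of this theorem (it simply defers to the reference [24]), and your argument --- Calder\'on--Stein extension across the Lipschitz boundary, multiplication by a cutoff to land in $W_{0}^{1,p}\left( B_{R}\right) $, then the Sobolev inequality stated just before --- is precisely the standard proof found in such references. Your closing caveat is also apt: $C_{ES}$ necessarily depends on $\Omega $ through the Lipschitz character of $\partial \Omega $ (and the choice of $B_{R}$), so the paper's assertion that the constant depends only on $p$ and $N$ should indeed be read as holding for a fixed domain.
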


For more details on Sobolev's spaces, refer to [24].

\section{Proof of Teorem \protect\ref{th3}}

In this section we will give the proof of Theorem \ref{th3}. \ Let $\Sigma
\subset \subset \Omega $ be a compact subset of $\Omega $, let $R_{0}$ be a
positive real number, let us consider $x_{0}\in \Sigma $, $0<\varrho \leq
t<s\leq R<\min \left( R_{0},1,\frac{dist\left( x_{0},\partial \Sigma \right) 
}{2}\right) $, $k\in 
\mathbb{R}
$\ and $\eta \in C_{c}^{\infty }\left( B_{s}\left( x_{0}\right) \right) $
such that $0\leq \eta \leq 1$ on $B_{s}\left( x_{0}\right) $, $\eta =1$ in $%
B_{t}\left( x_{0}\right) $ and $\left\vert \nabla \eta \right\vert \leq 
\frac{2}{s-t}$ on $B_{s}\left( x_{0}\right) $, moreover we we can observe
that supp$\left( \nabla \eta \right) \subset B_{s}\left( x_{0}\right)
\backslash B_{t}\left( x_{0}\right) $. If $u_{\varepsilon }$ is a local
sub-minimum of $J_{\varepsilon }\left( u,\Omega \right) $ we can define $%
\varphi =-\eta ^{p}w_{+}$ where%
\begin{equation*}
w_{+}=\left( 
\begin{tabular}{l}
$\left( u^{1}-k\right) _{+}$ \\ 
$0$ \\ 
$\vdots $ \\ 
$0$%
\end{tabular}%
\right)
\end{equation*}%
with $\left( u^{1}-k\right) _{+}=\max \left\{ u^{1}-k,0\right\} $ and,
moreover, we get%
\begin{equation*}
\mathcal{F}\left( u,B_{s}\left( x_{0}\right) \right) \leq \mathcal{F}\left(
u+\varphi ,B_{s}\left( x_{0}\right) \right)
\end{equation*}%
\ The previous variational inequality can be written like this%
\begin{equation*}
\begin{tabular}{l}
$\int\limits_{A_{k,s}^{1}}\left\vert \nabla u^{1}\right\vert
^{p}\,dx+\int\limits_{A_{k,s}^{1}}\sum\limits_{\alpha =2}^{m}\left\vert
\nabla u^{\alpha }\right\vert ^{p}\,dx+\int\limits_{B_{s}\left( x_{0}\right)
\backslash A_{k,s}^{1}}\sum\limits_{\alpha =1}^{m}\left\vert \nabla
u^{\alpha }\right\vert ^{p}\,dx$ \\ 
$+\int\limits_{A_{k,s}^{1}}G\left( x,u^{1},...,u^{m},\left\vert \nabla
u^{1}\right\vert ,...,\left\vert \nabla u^{m}\right\vert \right)
\,dx+\int\limits_{B_{s}\left( x_{0}\right) \backslash A_{k,s}^{1}}G\left(
x,u,\left\vert \nabla u^{1}\right\vert ,...,\left\vert \nabla
u^{m}\right\vert \right) \,dx$ \\ 
$\leq \int\limits_{A_{k,s}^{1}}\left( 1-\eta ^{p}\right) ^{p}\left\vert
\nabla u^{1}\right\vert ^{p}\,dx+p^{p}\int\limits_{A_{k,s}^{1}\backslash
A_{k,t}^{1}}\eta ^{\left( p-1\right) p}\left\vert \nabla \eta \right\vert
^{p}\left( u^{1}-k\right) ^{p}\,dx$ \\ 
$+\int\limits_{A_{k,s}^{1}}\sum\limits_{\alpha =2}^{m}\left\vert \nabla
u^{\alpha }\right\vert ^{p}\,dx+\int\limits_{B_{s}\left( x_{0}\right)
\backslash A_{k,s}^{1}}\sum\limits_{\alpha =1}^{m}\left\vert \nabla
u^{\alpha }\right\vert ^{p}\,dx$ \\ 
$+\int\limits_{A_{k,s}^{1}}G\left( x,\left( 1-\eta ^{p}\right) \left(
u^{1}-k\right) +k,...,u^{m},\left\vert \left( 1-\eta ^{p}\right) \nabla
u^{1}+p\eta ^{p-1}\nabla \eta \left( u^{1}-k\right) \right\vert
,...,\left\vert \nabla u^{m}\right\vert \right) \,dx$ \\ 
$+\int\limits_{B_{s}\left( x_{0}\right) \backslash A_{k,s}^{1}}G\left(
x,u,\left\vert \nabla u^{1}\right\vert ,...,\left\vert \nabla
u^{m}\right\vert \right) \,dx$%
\end{tabular}%
\end{equation*}%
where $A_{k,\varrho }^{1}=\left\{ u^{1}>k\right\} \cap B_{\varrho }\left(
x_{0}\right) $\ and with simple algebraic calculations then we get%
\begin{equation*}
\begin{tabular}{l}
$\int\limits_{A_{k,s}^{1}}\left\vert \nabla u^{1}\right\vert
^{p}\,dx+\int\limits_{A_{k,s}^{1}}G\left( x,u^{1},...,u^{m},\left\vert
\nabla u^{1}\right\vert ,...,\left\vert \nabla u^{m}\right\vert \right) \,dx$
\\ 
$\leq \int\limits_{A_{k,s}^{1}}\left( 1-\eta ^{p}\right) \left\vert \nabla
u^{1}\right\vert \,dx+p^{p}\int\limits_{A_{k,s}^{1}\backslash
A_{k,t}^{1}}\eta ^{p-1}\left\vert \nabla \eta \right\vert ^{p}\left(
u^{1}-k\right) ^{p}\,dx$ \\ 
$+\int\limits_{A_{k,s}^{1}}G\left( x,\left( 1-\eta ^{p}\right) \left(
u^{1}-k\right) +k,...,u^{m},\left\vert \left( 1-\eta ^{p}\right) \nabla
u^{1}+p\eta ^{p-1}\nabla \eta \left( u^{1}-k\right) \right\vert
,...,\left\vert \nabla u^{m}\right\vert \right) \,dx$%
\end{tabular}%
\end{equation*}%
Recalling that from the hypothesis H.1 we have%
\begin{equation*}
\sum\limits_{\alpha =1}^{m}\left\vert \nabla u^{\alpha }\right\vert
^{q}-\sum\limits_{\alpha =1}^{m}\left\vert u^{\alpha }\right\vert
^{q}-a\left( x\right) \leq G\left( x,u^{1},...,u^{m},\left\vert \nabla
u^{1}\right\vert ,...,\left\vert \nabla u^{m}\right\vert \right)
\end{equation*}%
and%
\begin{equation*}
\begin{tabular}{l}
$G\left( x,\left( 1-\eta ^{p}\right) \left( u^{1}-k\right)
+k,...,u^{m},\left\vert \left( 1-\eta ^{p}\right) \nabla u^{1}+p\eta
^{p-1}\nabla \eta \left( u^{1}-k\right) \right\vert ,...,\left\vert \nabla
u^{m}\right\vert \right) $ \\ 
$\leq L\left\vert \left( 1-\eta ^{p}\right) \nabla u^{1}+p\eta ^{p-1}\nabla
\eta \left( u^{1}-k\right) \right\vert ^{q}+L\sum\limits_{\alpha
=2}^{m}\left\vert \nabla u^{\alpha }\right\vert ^{q}$ \\ 
$+L\left\vert \left( 1-\eta ^{p}\right) \left( u^{1}-k\right) +k\right\vert
^{q}+L\sum\limits_{\alpha =2}^{m}\left\vert u^{\alpha }\right\vert
^{q}+La\left( x\right) $%
\end{tabular}%
\end{equation*}%
then we get%
\begin{equation*}
\begin{tabular}{l}
$\int\limits_{A_{k,s}^{1}}\left\vert \nabla u^{1}\right\vert
^{p}\,dx+\int\limits_{A_{k,s}^{1}}\sum\limits_{\alpha =1}^{m}\left\vert
\nabla u^{\alpha }\right\vert ^{q}-\sum\limits_{\alpha =1}^{m}\left\vert
u^{\alpha }\right\vert ^{q}-a\left( x\right) \,dx$ \\ 
$\leq \int\limits_{A_{k,s}^{1}}\left( 1-\eta ^{p}\right) \left\vert \nabla
u^{1}\right\vert ^{p}\,dx+p^{p}\int\limits_{A_{k,s}^{1}\backslash
A_{k,t}^{1}}\eta ^{p-1}\left\vert \nabla \eta \right\vert ^{p}\left(
u^{1}-k\right) ^{p}\,dx$ \\ 
$+\int\limits_{A_{k,s}^{1}}L\left\vert \left( 1-\eta ^{p}\right) \nabla
u^{1}+p\eta ^{p-1}\nabla \eta \left( u^{1}-k\right) \right\vert
^{q}+L\sum\limits_{\alpha =2}^{m}\left\vert \nabla u^{\alpha }\right\vert
^{q}\,dx$ \\ 
$+\int\limits_{A_{k,s}^{1}}L\left\vert \left( 1-\eta ^{p}\right) \left(
u^{1}-k\right) +k\right\vert ^{q}+L\sum\limits_{\alpha =2}^{m}\left\vert
u^{\alpha }\right\vert ^{q}+La\left( x\right) \,dx$%
\end{tabular}%
\end{equation*}%
\bigskip \bigskip From which, with simple weighting we have%
\begin{equation*}
\begin{tabular}{l}
$\int\limits_{A_{k,s}^{1}}\left\vert \nabla u^{1}\right\vert
^{p}\,dx+\int\limits_{A_{k,s}^{1}}\sum\limits_{\alpha =1}^{m}\left\vert
\nabla u^{\alpha }\right\vert ^{q}\,dx$ \\ 
$\leq \int\limits_{A_{k,s}^{1}}\left( 1-\eta ^{p}\right) \left\vert \nabla
u^{1}\right\vert ^{p}\,dx+p^{p}\int\limits_{A_{k,s}^{1}\backslash
A_{k,t}^{1}}\eta ^{p-1}\left\vert \nabla \eta \right\vert ^{p}\left(
u^{1}-k\right) ^{p}\,dx$ \\ 
$+\int\limits_{A_{k,s}^{1}}\sum\limits_{\alpha =1}^{m}\left\vert u^{\alpha
}\right\vert ^{q}+a\left( x\right) \,dx$ \\ 
$+\int\limits_{A_{k,s}^{1}}L\left( 1-\eta ^{p}\right) ^{q}\left\vert \nabla
u^{1}\right\vert ^{q}+p^{q}\eta ^{\left( p-1\right) q}\left\vert \nabla \eta
\right\vert ^{q}\left( u^{1}-k\right) ^{q}+L\sum\limits_{\alpha
=2}^{m}\left\vert \nabla u^{\alpha }\right\vert ^{q}\,dx$ \\ 
$+\int\limits_{A_{k,s}^{1}}L\left\vert \left( 1-\eta ^{p}\right) \left(
u^{1}-k\right) +k\right\vert ^{q}+L\sum\limits_{\alpha =2}^{m}\left\vert
u^{\alpha }\right\vert ^{q}+La\left( x\right) \,dx$%
\end{tabular}%
\end{equation*}%
Since $1-\eta ^{p}=0$ on $A_{k,t}^{1}$, it follows that%
\begin{equation}
\begin{tabular}{l}
$\ \int\limits_{A_{k,s}^{1}}\left\vert \nabla u^{1}\right\vert
^{p}\,+\left\vert \nabla u^{1}\right\vert ^{q}\,dx$ \\ 
$\leq L\int\limits_{A_{k,s}^{1}\backslash A_{k,t}^{1}}\left\vert \nabla
u^{1}\right\vert ^{p}+\left\vert \nabla u^{1}\right\vert
^{q}\,dx+p^{p}\int\limits_{A_{k,s}^{1}\backslash A_{k,t}^{1}}\eta
^{p-1}\left\vert \nabla \eta \right\vert ^{p}\left( u^{1}-k\right) ^{p}\,dx$
\\ 
$+\int\limits_{A_{k,s}^{1}}\sum\limits_{\alpha =1}^{m}\left\vert u^{\alpha
}\right\vert ^{q}\,dx+\int\limits_{A_{k,s}^{1}}L\left\vert \left( 1-\eta
^{p}\right) \left( u^{1}-k\right) +k\right\vert ^{q}+L\sum\limits_{\alpha
=2}^{m}\left\vert u^{\alpha }\right\vert ^{q}+\left( L+1\right) a\left(
x\right) \,dx$ \\ 
$+\int\limits_{A_{k,s}^{1}}p^{q}\eta ^{\left( p-1\right) q}\left\vert \nabla
\eta \right\vert ^{q}\left( u^{1}-k\right) ^{q}+L\sum\limits_{\alpha
=2}^{m}\left\vert \nabla u^{\alpha }\right\vert ^{q}\,dx$%
\end{tabular}
\label{5.1}
\end{equation}%
We observe that we can estimate the term 
\begin{equation*}
\int\limits_{A_{k,s}^{1}}\sum\limits_{\alpha =1}^{m}\left\vert u^{\alpha
}\right\vert ^{q}\,+L\sum\limits_{\alpha =2}^{m}\left\vert u^{\alpha
}\right\vert ^{q}+\left( L+1\right) a\left( x\right) \,dx
\end{equation*}%
using the H\"{o}lder inequality \ref{4.2} obtaining%
\begin{equation}
\begin{tabular}{l}
$\int\limits_{A_{k,s}^{1}}\sum\limits_{\alpha =1}^{m}\left\vert u^{\alpha
}\right\vert ^{q}\,+L\sum\limits_{\alpha =2}^{m}\left\vert u^{\alpha
}\right\vert ^{q}+\left( L+1\right) a\left( x\right) \,dx$ \\ 
$\leq m\left( L+1\right) \left[ \mathcal{L}^{n}\left( A_{k,s}^{1}\right) %
\right] ^{1-\frac{q}{p}}\left[ \int\limits_{A_{k,s}^{1}}\left\vert
u\right\vert ^{p}\,dx\right] ^{\frac{q}{p}}$ \\ 
$+\left( L+1\right) \left[ \mathcal{L}^{n}\left( A_{k,s}^{1}\right) \right]
^{1-\frac{1}{\sigma }}\left[ \int\limits_{A_{k,s}^{1}}a^{\sigma }\,dx\right]
^{\frac{1}{\sigma }}$%
\end{tabular}
\label{5.2}
\end{equation}%
Similarly we can also estimate the term%
\begin{equation*}
\int\limits_{A_{k,s}^{1}}L\sum\limits_{\alpha =2}^{m}\left\vert \nabla
u^{\alpha }\right\vert ^{q}\,dx
\end{equation*}%
using the H\"{o}lder inequality \ref{4.2} obtaining%
\begin{equation}
\begin{tabular}{l}
$\int\limits_{A_{k,s}^{1}}L\sum\limits_{\alpha =2}^{m}\left\vert \nabla
u^{\alpha }\right\vert ^{q}\,dx$ \\ 
$\leq mL\left[ \mathcal{L}^{n}\left( A_{k,s}^{1}\right) \right] ^{1-\frac{q}{%
p}}\left[ \int\limits_{A_{k,s}^{1}}\left\vert \nabla u\right\vert ^{p}\,dx%
\right] ^{\frac{q}{p}}$%
\end{tabular}
\label{5.3}
\end{equation}%
Using (\ref{5.1}), (\ref{5.2}) and (\ref{5.3})\ we get%
\begin{equation}
\begin{tabular}{l}
$\ \int\limits_{A_{k,s}^{1}}\left\vert \nabla u^{1}\right\vert
^{p}\,+\left\vert \nabla u^{1}\right\vert ^{q}\,dx$ \\ 
$\leq L\int\limits_{A_{k,s}^{1}\backslash A_{k,t}^{1}}\left\vert \nabla
u^{1}\right\vert ^{p}+\left\vert \nabla u^{1}\right\vert
^{q}\,dx+p^{p}\int\limits_{A_{k,s}^{1}\backslash A_{k,t}^{1}}\eta
^{p-1}\left\vert \nabla \eta \right\vert ^{p}\left( u^{1}-k\right) ^{p}\,dx$
\\ 
$+\int\limits_{A_{k,s}^{1}}p^{q}\eta ^{\left( p-1\right) q}\left\vert \nabla
\eta \right\vert ^{q}\left( u^{1}-k\right)
^{q}\,dx+\int\limits_{A_{k,s}^{1}}L\left\vert \left( 1-\eta ^{p}\right)
\left( u^{1}-k\right) +k\right\vert ^{q}\,dx$ \\ 
$+2m\left( L+1\right) \left[ \mathcal{L}^{n}\left( A_{k,s}^{1}\right) \right]
^{1-\frac{q}{p}}\left\Vert u\right\Vert _{W^{1,p}\left( A_{k,s}^{1}\right)
}^{q}+\left( L+1\right) \left[ \mathcal{L}^{n}\left( A_{k,s}^{1}\right) %
\right] ^{1-\frac{1}{\sigma }}\left\Vert a\right\Vert _{L^{\sigma }\left(
A_{k,s}^{1}\right) }$%
\end{tabular}
\label{5.4}
\end{equation}%
Using the Young Inequality \ref{4.1} we have%
\begin{equation}
\begin{tabular}{l}
$\ \int\limits_{A_{k,s}^{1}}p^{q}\eta ^{\left( p-1\right) q}\left\vert
\nabla \eta \right\vert ^{q}\left( u^{1}-k\right) ^{q}\,dx$ \\ 
$\leq p^{q}\int\limits_{A_{k,s}^{1}}1+\left\vert \nabla \eta \right\vert
^{p}\left( u^{1}-k\right) ^{p}\,dx$ \\ 
$\leq p^{q}\mathcal{L}^{n}\left( A_{k,s}^{1}\right)
+p^{q}\int\limits_{A_{k,s}^{1}}\left\vert \nabla \eta \right\vert ^{p}\left(
u^{1}-k\right) ^{p}\,dx$%
\end{tabular}
\label{5.5}
\end{equation}%
then using (\ref{5.4}) and (\ref{5.5}) it follows%
\begin{equation*}
\begin{tabular}{l}
$\ \int\limits_{A_{k,s}^{1}}\left\vert \nabla u^{1}\right\vert
^{p}\,+\left\vert \nabla u^{1}\right\vert ^{q}\,dx$ \\ 
$\leq L\int\limits_{A_{k,s}^{1}\backslash A_{k,t}^{1}}\left\vert \nabla
u^{1}\right\vert ^{p}+\left\vert \nabla u^{1}\right\vert ^{q}\,dx+\left(
p^{p}+p^{q}\right) \int\limits_{A_{k,s}^{1}\backslash A_{k,t}^{1}}\eta
^{p-1}\left\vert \nabla \eta \right\vert ^{p}\left( u^{1}-k\right) ^{p}\,dx$
\\ 
$+p^{q}\mathcal{L}^{n}\left( A_{k,s}^{1}\right)
+\int\limits_{A_{k,s}^{1}}L\left\vert \left( 1-\eta ^{p}\right) \left(
u^{1}-k\right) +k\right\vert ^{q}\,dx$ \\ 
$+2m\left( L+1\right) \left[ \mathcal{L}^{n}\left( A_{k,s}^{1}\right) \right]
^{1-\frac{q}{p}}\left\Vert u\right\Vert _{W^{1,p}\left( A_{k,s}^{1}\right)
}^{q}+\left( L+1\right) \left[ \mathcal{L}^{n}\left( A_{k,s}^{1}\right) %
\right] ^{1-\frac{1}{\sigma }}\left\Vert a\right\Vert _{L^{\sigma }\left(
A_{k,s}^{1}\right) }$%
\end{tabular}%
\end{equation*}%
and%
\begin{equation*}
\begin{tabular}{l}
$\ \int\limits_{A_{k,s}^{1}}\left\vert \nabla u^{1}\right\vert
^{p}\,+\left\vert \nabla u^{1}\right\vert ^{q}\,dx$ \\ 
$\leq L\int\limits_{A_{k,s}^{1}\backslash A_{k,t}^{1}}\left\vert \nabla
u^{1}\right\vert ^{p}+\left\vert \nabla u^{1}\right\vert ^{q}\,dx+\left(
p^{p}+p^{q}\right) \int\limits_{A_{k,s}^{1}\backslash A_{k,t}^{1}}\eta
^{p-1}\left\vert \nabla \eta \right\vert ^{p}\left( u^{1}-k\right) ^{p}\,dx+$
\\ 
$+\left( p^{q}+L\right) \mathcal{L}^{n}\left( A_{k,s}^{1}\right)
+2^{p-1}L\int\limits_{A_{k,s}^{1}\backslash A_{k,t}^{1}}\left(
u^{1}-k\right) ^{p}\,dx+2^{p-1}L\,\left\vert k\right\vert ^{p}\,\mathcal{L}%
^{n}\left( A_{k,s}^{1}\right) $ \\ 
$+2m\left( L+1\right) \left[ \mathcal{L}^{n}\left( A_{k,s}^{1}\right) \right]
^{1-\frac{q}{p}}\left\Vert u\right\Vert _{W^{1,p}\left( A_{k,s}^{1}\right)
}^{q}+\left( L+1\right) \left[ \mathcal{L}^{n}\left( A_{k,s}^{1}\right) %
\right] ^{1-\frac{1}{\sigma }}\left\Vert a\right\Vert _{L^{\sigma }\left(
A_{k,s}^{1}\right) }$%
\end{tabular}%
\end{equation*}%
Moreover, since $1\leq q<\frac{p^{2}}{n}$ and $\sigma >\frac{n}{p}$, we get%
\begin{equation*}
\begin{tabular}{l}
$\int\limits_{A_{k,s}^{1}}\left\vert \nabla u^{1}\right\vert
^{p}\,+\left\vert \nabla u^{1}\right\vert ^{q}\,dx$ \\ 
$\leq L\int\limits_{A_{k,s}^{1}\backslash A_{k,t}^{1}}\left\vert \nabla
u^{1}\right\vert ^{p}+\left\vert \nabla u^{1}\right\vert ^{q}\,dx+\frac{%
2^{p}\left( p^{p}+p^{q}\right) }{\left( s-t\right) ^{p}}\int%
\limits_{A_{k,s}^{1}\backslash A_{k,t}^{1}}\left( u^{1}-k\right) ^{p}\,dx$
\\ 
$+D_{\Sigma }\left( 1+\left\vert k\right\vert ^{p}\right) \left[ \mathcal{L}%
^{n}\left( A_{k,s}^{1}\right) \right] ^{1-\frac{p}{n}+\varepsilon }$%
\end{tabular}%
\end{equation*}%
where%
\begin{equation*}
D_{\Sigma }=2^{p-1}L\left( p^{q}+L+2m\left( L+1\right) \left\Vert
u\right\Vert _{W^{1,p}\left( \Sigma \right) }^{q}+\left( L+1\right)
\left\Vert a\right\Vert _{L^{\sigma }\left( \Sigma \right) }\right)
\end{equation*}%
then it follows%
\begin{equation*}
\begin{tabular}{l}
$\int\limits_{A_{k,s}^{1}}\left\vert \nabla u^{1}\right\vert
^{p}\,+\left\vert \nabla u^{1}\right\vert ^{q}\,dx$ \\ 
$\leq \frac{L}{L+1}\int\limits_{A_{k,s}^{1}}\left\vert \nabla
u^{1}\right\vert ^{p}+\left\vert \nabla u^{1}\right\vert ^{q}\,dx+\frac{%
2^{p}\left( p^{p}+p^{q}\right) }{\left( L+1\right) \left( s-t\right) ^{p}}%
\int\limits_{A_{k,s}^{1}}\left( u^{1}-k\right) ^{p}\,dx$ \\ 
$+\frac{D_{\Sigma }}{L+1}\left( 1+\left\vert k\right\vert ^{p}\right) \left[ 
\mathcal{L}^{n}\left( A_{k,s}^{1}\right) \right] ^{1-\frac{p}{n}+\varepsilon
}$%
\end{tabular}%
\end{equation*}%
Using Lemma \ref{lemma3} we have the Caccioppoli Inequaity%
\begin{equation*}
\begin{tabular}{l}
$\int\limits_{A_{k,\varrho }^{1}}\left\vert \nabla u^{1}\right\vert
^{p}\,+\left\vert \nabla u^{1}\right\vert ^{q}\,dx$ \\ 
$\leq \frac{C_{1,\Sigma }}{\left( R-\varrho \right) ^{p}}\int%
\limits_{A_{k,R}^{1}}\left( u^{1}-k\right) ^{p}\,dx+C_{2,\Sigma }\left(
1+R^{-\varepsilon n}\left\vert k\right\vert ^{p}\right) \left[ \mathcal{L}%
^{n}\left( A_{k,R}^{1}\right) \right] ^{1-\frac{p}{n}+\varepsilon }$%
\end{tabular}%
\end{equation*}%
Similarly you can proceed for $\alpha =2,...,m$ and we get $u^{\alpha }\in
DG^{+}\left( \Omega ,p,\lambda ,\lambda _{\ast },\chi ,\varepsilon
,R_{0}\right) $ for every $\alpha =1,...,m$, with $\lambda =C_{\alpha
,\Sigma }$, $\lambda _{\ast }=C_{\alpha ,\Sigma }$ and $\chi =1$. Since $-u$
is a minimizer of the integral functional 
\begin{equation*}
\mathcal{\tilde{F}}\left( v,\Omega \right) =\int\limits_{\Omega
}\sum\limits_{\alpha =1}^{m}\left\vert \nabla v^{\alpha }\right\vert
^{p}+G\left( x,-v,\left\vert \nabla v^{1}\right\vert ,...,\left\vert \nabla
v^{m}\right\vert \right) \,dx
\end{equation*}%
then $u^{\alpha }\in DG^{-}\left( \Omega ,p,\lambda ,\lambda _{\ast },\chi
,\varepsilon ,R_{0}\right) $ for every $\alpha =1,...,m$, with $\lambda
=C_{\alpha ,\Sigma }$, $\lambda _{\ast }=C_{\alpha ,\Sigma }$ and $\chi =1$.
It follows that $u^{\alpha }\in DG\left( \Omega ,p,\lambda ,\lambda _{\ast
},\chi ,\varepsilon ,R_{0}\right) $ for every $\alpha =1,...,m$, with $%
\lambda =C_{\alpha ,\Sigma }$, $\lambda _{\ast }=C_{\alpha ,\Sigma }$ and $%
\chi =1$.

\bigskip

\section{Proof of Theorem \protect\ref{th1}}

Now, using the techniques introduced in [12, 22, 24, 31, 32, 33], the proof
of Theorem \ref{th1} is a direct consequence of Theorem \ref{th3} and
Theorem \ref{th2}.

\begin{theorem}
If $u\in W^{1,p}\left( \Omega ,%
\mathbb{R}
^{m}\right) $ is a minimum of the functional (1.1) then $u\in W^{1,p}\left(
\Omega ,%
\mathbb{R}
^{m}\right) \cap L_{loc}^{\infty }\left( \Omega ,%
\mathbb{R}
^{m}\right) $.
\end{theorem}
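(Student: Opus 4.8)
The plan is to read off the conclusion directly from the two structural results already established, since all the analytic work has been done upstream. The membership $u\in W^{1,p}\left( \Omega ,\mathbb{R}^{m}\right) $ is part of the hypothesis, so only the local boundedness $u\in L_{loc}^{\infty }\left( \Omega ,\mathbb{R}^{m}\right) $ requires an argument; and because $u\in L_{loc}^{\infty }\left( \Omega ,\mathbb{R}^{m}\right) $ means exactly that $u^{\alpha }\in L_{loc}^{\infty }\left( \Omega \right) $ for each $\alpha =1,\ldots ,m$, it suffices to bound each scalar component separately.

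First I would invoke Theorem \ref{th3}: because $u$ is a minimum of the functional (\ref{1.1}) and H.1 holds, every component $u^{\alpha }$ belongs to the De Giorgi class $DG\left( \Omega ,p,\lambda ,\lambda _{\ast },\chi ,\varepsilon ,R_{0}\right) $ for $\alpha =1,\ldots ,m$. This is precisely the membership required to activate the sup-estimate. Next I would apply Theorem \ref{th2} to each $v=u^{\alpha }$. Fix an arbitrary compact set $K\subset \subset \Omega $ and, choosing any $\tau \in (0,1)$, cover $K$ by finitely many balls $B_{\tau \varrho }\left( x_{j}\right) $ whose concentric companions satisfy $B_{\varrho }\left( x_{j}\right) \subset \subset \Omega $ and $0<\varrho <R_{0}$; such a finite subcover exists by compactness. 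On each ball Theorem \ref{th2} gives
\begin{equation*}
\left\Vert u^{\alpha }\right\Vert _{L^{\infty }\left( B_{\tau \varrho }\left( x_{j}\right) \right) }\leq \max \left\{ \lambda _{\ast }\varrho ^{\frac{n\varepsilon }{p}};\frac{C}{\left( 1-\tau \right) ^{\frac{n}{p}}}\left[ \frac{1}{\left\vert B_{\varrho }\left( x_{j}\right) \right\vert }\int\limits_{B_{\varrho }\left( x_{j}\right) }\left\vert u^{\alpha }\right\vert ^{p}\,dx\right] ^{\frac{1}{p}}\right\} ,
\end{equation*}
and the right-hand side is finite because $u^{\alpha }\in W^{1,p}\left( \Omega \right) \subset L_{loc}^{p}\left( \Omega \right) $. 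Taking the maximum over the finitely many balls of the subcover yields a finite bound for $\left\Vert u^{\alpha }\right\Vert _{L^{\infty }\left( K\right) }$.

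Since $K\subset \subset \Omega $ was arbitrary, each $u^{\alpha }\in L_{loc}^{\infty }\left( \Omega \right) $, hence $u\in L_{loc}^{\infty }\left( \Omega ,\mathbb{R}^{m}\right) $; combined with the hypothesis $u\in W^{1,p}\left( \Omega ,\mathbb{R}^{m}\right) $ this establishes $u\in W^{1,p}\left( \Omega ,\mathbb{R}^{m}\right) \cap L_{loc}^{\infty }\left( \Omega ,\mathbb{R}^{m}\right) $. I do not expect a genuine obstacle here: the entire substantive content has already been absorbed into the Caccioppoli-type inequality of Theorem \ref{th3} and the De Giorgi sup-bound of Theorem \ref{th2}, so this statement is essentially a formal corollary of the two. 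The only point demanding minor care is the routine compactness/covering step needed to pass from the ball-wise estimate of Theorem \ref{th2} to a uniform bound on an arbitrary compact subset of $\Omega $.
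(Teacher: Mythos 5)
Your proposal is correct and follows exactly the route the paper takes: the paper's own proof is the one-line remark that the result ``follows using Theorem \ref{th3} and Theorem \ref{th2},'' and you have simply filled in the routine details (componentwise reduction, finiteness of the $L^{p}$ average, and the covering argument) of that same deduction.
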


\begin{proof}
It follows using Theorem \ref{th3} and Theorem \ref{th2}.
\end{proof}

In particular, if $\Sigma \subset \subset \Omega $ is a compact subset of $%
\Omega $, it follows that 
\begin{equation}
\left\vert u\right\vert <M=\sqrt{\sum\limits_{\alpha =1}^{m}\left( M^{\alpha
}\right) ^{2}}
\end{equation}%
where $M^{\alpha }=\sup\limits_{\Sigma }\left\{ \left\vert u^{\alpha
}\right\vert \right\} $. We observe that from the hypothesis H.1 it follows
that for every $\beta \in \left[ 1,....,m\right] $ we get 
\begin{equation}
\sum\limits_{\alpha =1}^{m}\left\vert \xi ^{\alpha }\right\vert
^{q}-\sum\limits_{\alpha =1}^{m}\left\vert M^{\alpha }\right\vert
^{q}-a\left( x\right) \leq G\left( x,u^{1},...,u^{m},\left\vert \xi
^{1}\right\vert ,...,\left\vert \xi ^{m}\right\vert \right) \leq L\left[
\sum\limits_{\alpha =1}^{m}\left\vert \xi ^{\alpha }\right\vert
^{q}+\left\vert u^{\beta }\right\vert ^{q}+\sum\limits_{\alpha \neq \beta
}^{{}}\left\vert M^{\alpha }\right\vert ^{q}+a\left( x\right) \right]
\end{equation}%
for $\mathcal{L}^{n}$ a. e. $x\in \Omega $, for every $u^{\alpha }\in 
\mathbb{R}
$, $\left\vert u^{\alpha }\right\vert <M^{\alpha }$ and for every $\xi
^{\alpha }\in 
\mathbb{R}
$ with $\alpha =1,...,m$ and $m\geq 1$ and with $a\left( x\right) \in
L^{\sigma }\left( \Omega \right) $, $a(x)\geq 0$ for $\mathcal{L}^{n}$ a. e. 
$x\in \Omega $, $\sigma >\frac{n}{p}$, $1\leq q<\frac{p^{2}}{n}$ and $1<p<n$.

\begin{theorem}
Let $u\in W^{1,p}\left( \Omega ,%
\mathbb{R}
^{m}\right) \cap L_{loc}^{\infty }\left( \Omega ,%
\mathbb{R}
^{m}\right) $ be a minimun of the functional (1.1)\ then for all $\Sigma
\subset \subset \Omega $\ compact subset of $\Omega $ there exist two
positive constants $\tilde{D}_{1,\Sigma }$ and $\tilde{D}_{2,\Sigma }$ such
that for all $x_{0}\in \Sigma $, for every $0<\varrho <R<\min \left\{ 1,%
\frac{dist\left( x_{0},\partial \Sigma \right) }{2}\right\} $ and for every $%
k\in 
\mathbb{R}
$ it follows 
\begin{equation}
\begin{tabular}{l}
$\int\limits_{A_{k,\varrho }^{\alpha }}\left\vert \nabla u^{\alpha
}\right\vert ^{p}\,+\left\vert \nabla u^{\alpha }\right\vert ^{q}\,dx$ \\ 
$\leq \frac{\tilde{D}_{1,\Sigma }}{\left( R-\varrho \right) ^{p}}%
\int\limits_{A_{k,R}^{\alpha }}\left( u^{\alpha }-k\right) ^{p}\,dx+\tilde{D}%
_{2,\Sigma }\left[ \mathcal{L}^{n}\left( A_{k,R}^{\alpha }\right) \right]
^{1-\frac{p}{n}+\varepsilon }$%
\end{tabular}%
\end{equation}%
and%
\begin{equation}
\begin{tabular}{l}
$\int\limits_{B_{k,\varrho }^{\alpha }}\left\vert \nabla u^{\alpha
}\right\vert ^{p}\,+\left\vert \nabla u^{\alpha }\right\vert ^{q}\,dx$ \\ 
$\leq \frac{\tilde{D}_{1,\Sigma }}{\left( R-\varrho \right) ^{p}}%
\int\limits_{B_{k,R}^{\alpha }}\left( k-u^{\alpha }\right) ^{p}\,dx+\tilde{D}%
_{2,\Sigma }\left[ \mathcal{L}^{n}\left( B_{k,R}^{\alpha }\right) \right]
^{1-\frac{p}{n}+\varepsilon }$%
\end{tabular}%
\end{equation}%
for every $\alpha =1,...,m$.
\end{theorem}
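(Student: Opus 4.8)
The plan is to re-run the Caccioppoli computation carried out in the proof of Theorem \ref{th3}, but now feeding in the refined growth inequality for $G$ that becomes available once $u$ is known to be locally bounded on $\Sigma$. Fix a compact $\Sigma \subset\subset \Omega$, a point $x_0 \in \Sigma$, radii $0 < \varrho \leq t < s \leq R < \min\left\{1, \frac{dist\left(x_0, \partial\Sigma\right)}{2}\right\}$, a level $k \in \mathbb{R}$, and the same cut-off $\eta$ as before ($\eta = 1$ on $B_t$, $\nabla\eta$ supported in $B_s \setminus B_t$, $|\nabla\eta| \leq \frac{2}{s-t}$). Testing minimality with the variation $\varphi = -\eta^p w_+$, where $w_+$ has $\alpha$-th component $(u^\alpha - k)_+$ and all others zero, and expanding exactly as in Section 4, reproduces the same chain of inequalities; the only change is that in the upper bound for $G$ I would use the refined condition with the constants $M^\alpha = \sup_\Sigma |u^\alpha|$ in place of H.1.

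The decisive simplification comes from local boundedness. In the proof of Theorem \ref{th3} the terms $\sum_\alpha |u^\alpha|^q$ were estimated by the H\"{o}lder inequality \ref{4.2}, producing the factor $\|u\|_{W^{1,p}(\Sigma)}^q$ and, together with $L\left\vert (1-\eta^p)(u^\alpha - k) + k\right\vert^q$, forcing the coefficient $(1 + R^{-n\varepsilon}|k|^p)$ on the measure term. Here I would instead bound $|u^\alpha|^q \leq (M^\alpha)^q$ pointwise and treat $\sum_\alpha (M^\alpha)^q$ as a constant depending only on $\Sigma$. The only genuinely $k$-dependent contribution is then $|k|^p \mathcal{L}^n(A_{k,R}^\alpha)$, and this is controlled in every case: on a nonempty superlevel set $A_{k,R}^\alpha$ one has either $|k| \leq 2M^\alpha$, so that $|k|^p \leq (2M^\alpha)^p$ is a constant, or $|k| > 2M^\alpha$, which (the set being nonempty) forces $k < -2M^\alpha$ and hence $u^\alpha - k > \frac{|k|}{2}$ pointwise, giving $|k|^p \mathcal{L}^n(A_{k,R}^\alpha) \leq 2^p \int_{A_{k,R}^\alpha}(u^\alpha - k)^p\,dx$. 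Either way the $|k|^p$ term is absorbed into $\tilde{D}_{1,\Sigma}$ or $\tilde{D}_{2,\Sigma}$, and the factor $(1 + R^{-n\varepsilon}|k|^p)$ disappears.

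After these substitutions, and collecting the remaining data terms via \ref{4.2}, the measure contributions carry exponents $1$, $1-\frac{q}{p}$ and $1-\frac{1}{\sigma}$, each of which is $\geq 1 - \frac{p}{n} + \varepsilon$ for $\varepsilon>0$ small, since $1\leq q < \frac{p^2}{n}$, $\sigma > \frac{n}{p}$ and $\mathcal{L}^n(A_{k,R}^\alpha) \leq \mathcal{L}^n(B_R) \leq \mathcal{L}^n(B_1)$ is bounded because $R<1$; hence they all collapse into a single term $C_\Sigma\left[\mathcal{L}^n(A_{k,R}^\alpha)\right]^{1-\frac{p}{n}+\varepsilon}$. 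The estimate then reads
\begin{equation*}
\int_{A_{k,s}^\alpha}\left(|\nabla u^\alpha|^p + |\nabla u^\alpha|^q\right) dx \leq L\int_{A_{k,s}^\alpha \setminus A_{k,t}^\alpha}\left(|\nabla u^\alpha|^p + |\nabla u^\alpha|^q\right) dx + \frac{C}{(s-t)^p}\int_{A_{k,R}^\alpha}(u^\alpha-k)^p\,dx + C_\Sigma \left[\mathcal{L}^n(A_{k,R}^\alpha)\right]^{1-\frac{p}{n}+\varepsilon}.
\end{equation*}
Filling the hole and applying Lemma \ref{lemma3} to $Z(t) = \int_{A_{k,t}^\alpha}\left(|\nabla u^\alpha|^p + |\nabla u^\alpha|^q\right) dx$ (with $\theta = \frac{L}{L+1} < 1$ after dividing through by $L+1$, and using $A_{k,s}^\alpha \subseteq A_{k,R}^\alpha$ to freeze the last two integrals at radius $R$) absorbs the gradient term on $B_s \setminus B_t$ and yields the first asserted inequality with $\tilde{D}_{1,\Sigma}, \tilde{D}_{2,\Sigma}$ depending only on $\Sigma$. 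The second inequality, governing $B_{k,\varrho}^\alpha$, follows by the same argument applied to $-u$, which minimizes $\tilde{\mathcal{F}}$ as observed at the end of Section 4. The only real obstacle is the bookkeeping of the second paragraph: one must check that every $k$-dependent quantity is, thanks to local boundedness, either a constant or pointwise dominated by $(u^\alpha - k)^p$, so that the clean, $k$-free coefficient $\tilde{D}_{2,\Sigma}$ can be extracted.
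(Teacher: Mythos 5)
Your proposal is correct and follows the same skeleton as the paper's proof: the same test function $\varphi=-\eta^{p}w_{+}$, the refined growth inequality with $M^{\alpha}=\sup_{\Sigma}|u^{\alpha}|$ in place of H.1, H\"older's inequality \ref{4.2} for the data terms $a(x)$ and $\sum_{\alpha\geq 2}|\nabla u^{\alpha}|^{q}$, the reduction of all measure exponents to $1-\frac{p}{n}+\varepsilon$ using $q<\frac{p^{2}}{n}$, $\sigma>\frac{n}{p}$ and $R<1$, hole-filling followed by Lemma \ref{lemma3}, and the passage to $-u$ for the sublevel sets. The one place where you genuinely diverge is the step that is the whole point of this theorem, namely the removal of the factor $\left(1+R^{-n\varepsilon}|k|^{p}\right)$ from the measure term. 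The paper never lets an isolated $|k|$ appear: it rewrites $\left(1-\eta^{p}\right)\left(u^{1}-k\right)+k$ as $u^{1}-\eta^{p}\left(u^{1}-k\right)$, bounds $|u^{1}|\leq M^{1}$ on $\Sigma$, and applies Young's inequality to the remaining $\eta^{pq}\left(u^{1}-k\right)^{q}$ to produce a constant plus $\frac{q}{p}\frac{\eta^{p}\left(u^{1}-k\right)^{p}}{\left(s-t\right)^{p}}$, so the $k$-dependence is dissolved algebraically before any case distinction is needed. You instead keep the explicit $|k|^{p}\mathcal{L}^{n}\left(A_{k,R}^{\alpha}\right)$ and dispose of it by the dichotomy $|k|\leq 2M^{\alpha}$ (constant) versus $|k|>2M^{\alpha}$ (which, on a nonempty superlevel set, forces $k<-2M^{\alpha}$ and $u^{\alpha}-k>\frac{|k|}{2}$, so the term is dominated by $2^{p}\int_{A_{k,R}^{\alpha}}\left(u^{\alpha}-k\right)^{p}dx$ and absorbed into $\tilde{D}_{1,\Sigma}$). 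Both mechanisms are standard and both are valid here; the paper's rewriting is slightly more self-contained (no case split, and the constants are written out explicitly in $D_{1}$ and $D_{\Sigma}$), while your dichotomy is arguably more transparent about \emph{why} local boundedness is exactly the hypothesis that lets the level $k$ disappear from the coefficient. Your closing estimate and the application of Lemma \ref{lemma3} (with $\theta=\frac{L}{L+1}$ after adding $L$ times the left-hand side) match the paper's.
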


\begin{proof}
Let $\Sigma \subset \subset \Omega $ be a compact subset of $\Omega $, let $%
R_{0}$ be a positive real number, let us consider $x_{0}\in \Sigma $, $%
0<\varrho \leq t<s\leq R<\min \left( 1,\frac{dist\left( x_{0},\partial
\Sigma \right) }{2}\right) $, $k\in 
\mathbb{R}
$\ and $\eta \in C_{c}^{\infty }\left( B_{s}\left( x_{0}\right) \right) $
such that $0\leq \eta \leq 1$ on $B_{s}\left( x_{0}\right) $, $\eta =1$ in $%
B_{t}\left( x_{0}\right) $ and $\left\vert \nabla \eta \right\vert \leq 
\frac{2}{s-t}$ on $B_{s}\left( x_{0}\right) $, moreover we we can observe
that supp$\left( \nabla \eta \right) \subset B_{s}\left( x_{0}\right)
\backslash B_{t}\left( x_{0}\right) $. If $u_{\varepsilon }$ is a local
sub-minimum of $J_{\varepsilon }\left( u,\Omega \right) $ we can define $%
\varphi =-\eta ^{p}w_{+}$ where%
\begin{equation*}
w_{+}=\left( 
\begin{tabular}{l}
$\left( u^{1}-k\right) _{+}$ \\ 
$0$ \\ 
$\vdots $ \\ 
$0$%
\end{tabular}%
\right)
\end{equation*}%
with $\left( u^{1}-k\right) _{+}=\max \left\{ u^{1}-k,0\right\} $ and,
moreover, we get%
\begin{equation*}
\mathcal{F}\left( u,B_{s}\left( x_{0}\right) \right) \leq \mathcal{F}\left(
u+\varphi ,B_{s}\left( x_{0}\right) \right)
\end{equation*}%
\ The previous variational inequality can be written like this%
\begin{equation*}
\begin{tabular}{l}
$\int\limits_{A_{k,s}^{1}}\left\vert \nabla u^{1}\right\vert
^{p}\,dx+\int\limits_{A_{k,s}^{1}}\sum\limits_{\alpha =2}^{m}\left\vert
\nabla u^{\alpha }\right\vert ^{p}\,dx+\int\limits_{B_{s}\left( x_{0}\right)
\backslash A_{k,s}^{1}}\sum\limits_{\alpha =1}^{m}\left\vert \nabla
u^{\alpha }\right\vert ^{p}\,dx+$ \\ 
$\int\limits_{A_{k,s}^{1}}G\left( x,u^{1},...,u^{m},\left\vert \nabla
u^{1}\right\vert ,...,\left\vert \nabla u^{m}\right\vert \right)
\,dx+\int\limits_{B_{s}\left( x_{0}\right) \backslash A_{k,s}^{1}}G\left(
x,u,\left\vert \nabla u^{1}\right\vert ,...,\left\vert \nabla
u^{m}\right\vert \right) \,dx$ \\ 
$\leq \int\limits_{A_{k,s}^{1}}\left( 1-\eta ^{p}\right) ^{p}\left\vert
\nabla u^{1}\right\vert ^{p}\,dx+p^{p}\int\limits_{A_{k,s}^{1}\backslash
A_{k,t}^{1}}\eta ^{\left( p-1\right) p}\left\vert \nabla \eta \right\vert
^{p}\left( u^{1}-k\right) ^{p}\,dx$ \\ 
$+\int\limits_{A_{k,s}^{1}}\sum\limits_{\alpha =2}^{m}\left\vert \nabla
u^{\alpha }\right\vert ^{p}\,dx+\int\limits_{B_{s}\left( x_{0}\right)
\backslash A_{k,s}^{1}}\sum\limits_{\alpha =1}^{m}\left\vert \nabla
u^{\alpha }\right\vert ^{p}\,dx$ \\ 
$+\int\limits_{A_{k,s}^{1}}G\left( x,\left( 1-\eta ^{p}\right) \left(
u^{1}-k\right) +k,...,u^{m},\left\vert \left( 1-\eta ^{p}\right) \nabla
u^{1}+p\eta ^{p-1}\nabla \eta \left( u^{1}-k\right) \right\vert
,...,\left\vert \nabla u^{m}\right\vert \right) \,dx$ \\ 
$+\int\limits_{B_{s}\left( x_{0}\right) \backslash A_{k,s}^{1}}G\left(
x,u,\left\vert \nabla u^{1}\right\vert ,...,\left\vert \nabla
u^{m}\right\vert \right) \,dx$%
\end{tabular}%
\end{equation*}%
where $A_{k,\varrho }^{1}=\left\{ u^{1}>k\right\} \cap B_{\varrho }\left(
x_{0}\right) $\ and with simple algebraic calculations then we get%
\begin{equation*}
\begin{tabular}{l}
$\int\limits_{A_{k,s}^{1}}\left\vert \nabla u^{1}\right\vert
^{p}\,dx+\int\limits_{A_{k,s}^{1}}G\left( x,u^{1},...,u^{m},\left\vert
\nabla u^{1}\right\vert ,...,\left\vert \nabla u^{m}\right\vert \right) \,dx$
\\ 
$\leq \int\limits_{A_{k,s}^{1}}\left( 1-\eta ^{p}\right) \left\vert \nabla
u^{1}\right\vert \,dx+p^{p}\int\limits_{A_{k,s}^{1}\backslash
A_{k,t}^{1}}\eta ^{p-1}\left\vert \nabla \eta \right\vert ^{p}\left(
u^{1}-k\right) ^{p}\,dx$ \\ 
$+\int\limits_{A_{k,s}^{1}}G\left( x,\left( 1-\eta ^{p}\right) \left(
u^{1}-k\right) +k,...,u^{m},\left\vert \left( 1-\eta ^{p}\right) \nabla
u^{1}+p\eta ^{p-1}\nabla \eta \left( u^{1}-k\right) \right\vert
,...,\left\vert \nabla u^{m}\right\vert \right) \,dx$%
\end{tabular}%
\end{equation*}%
Recalling that from (5.1), with $\beta =1$, we have%
\begin{equation*}
\sum\limits_{\alpha =1}^{m}\left\vert \nabla u^{\alpha }\right\vert
^{q}-\sum\limits_{\alpha =1}^{m}\left\vert M^{\alpha }\right\vert
^{q}-a\left( x\right) \leq G\left( x,u^{1},...,u^{m},\left\vert \nabla
u^{1}\right\vert ,...,\left\vert \nabla u^{m}\right\vert \right)
\end{equation*}%
and%
\begin{equation*}
\begin{tabular}{l}
$G\left( x,\left( 1-\eta ^{p}\right) \left( u^{1}-k\right)
+k,...,u^{m},\left\vert \left( 1-\eta ^{p}\right) \nabla u^{1}+p\eta
^{p-1}\nabla \eta \left( u^{1}-k\right) \right\vert ,...,\left\vert \nabla
u^{m}\right\vert \right) $ \\ 
$\leq L\left\vert \left( 1-\eta ^{p}\right) \nabla u^{1}+p\eta ^{p-1}\nabla
\eta \left( u^{1}-k\right) \right\vert ^{q}+L\sum\limits_{\alpha
=2}^{m}\left\vert \nabla u^{\alpha }\right\vert ^{q}$ \\ 
$+L\left\vert u^{1}-\eta ^{p}\left( u^{1}-k\right) \right\vert
^{q}+L\sum\limits_{\alpha =2}^{m}\left\vert M^{\alpha }\right\vert
^{q}+La\left( x\right) $%
\end{tabular}%
\end{equation*}%
then we get%
\begin{equation*}
\begin{tabular}{l}
$\int\limits_{A_{k,s}^{1}}\left\vert \nabla u^{1}\right\vert
^{p}\,dx+\int\limits_{A_{k,s}^{1}}\sum\limits_{\alpha =1}^{m}\left\vert
\nabla u^{\alpha }\right\vert ^{q}-\sum\limits_{\alpha =1}^{m}\left\vert
M^{\alpha }\right\vert ^{q}-a\left( x\right) \,dx$ \\ 
$\leq \int\limits_{A_{k,s}^{1}}\left( 1-\eta ^{p}\right) \left\vert \nabla
u^{1}\right\vert ^{p}\,dx+p^{p}\int\limits_{A_{k,s}^{1}\backslash
A_{k,t}^{1}}\eta ^{p-1}\left\vert \nabla \eta \right\vert ^{p}\left(
u^{1}-k\right) ^{p}\,dx$ \\ 
$+\int\limits_{A_{k,s}^{1}}L\left\vert \left( 1-\eta ^{p}\right) \nabla
u^{1}+p\eta ^{p-1}\nabla \eta \left( u^{1}-k\right) \right\vert
^{q}+L\sum\limits_{\alpha =2}^{m}\left\vert \nabla u^{\alpha }\right\vert
^{q}\,dx$ \\ 
$+\int\limits_{A_{k,s}^{1}}L\left\vert u^{1}-\eta ^{p}\left( u^{1}-k\right)
\right\vert ^{q}+L\sum\limits_{\alpha =2}^{m}\left\vert M^{\alpha
}\right\vert ^{q}+La\left( x\right) \,dx$%
\end{tabular}%
\end{equation*}%
\bigskip \bigskip From which, with simple weighting we have%
\begin{equation*}
\begin{tabular}{l}
$\int\limits_{A_{k,s}^{1}}\left\vert \nabla u^{1}\right\vert
^{p}\,dx+\int\limits_{A_{k,s}^{1}}\sum\limits_{\alpha =1}^{m}\left\vert
\nabla u^{\alpha }\right\vert ^{q}\,dx$ \\ 
$\leq \int\limits_{A_{k,s}^{1}}\left( 1-\eta ^{p}\right) \left\vert \nabla
u^{1}\right\vert ^{p}\,dx+p^{p}\int\limits_{A_{k,s}^{1}\backslash
A_{k,t}^{1}}\eta ^{p-1}\left\vert \nabla \eta \right\vert ^{p}\left(
u^{1}-k\right) ^{p}\,dx$ \\ 
$+\int\limits_{A_{k,s}^{1}}\sum\limits_{\alpha =1}^{m}\left\vert M^{\alpha
}\right\vert ^{q}+a\left( x\right) \,dx$ \\ 
$+\int\limits_{A_{k,s}^{1}}L\left( 1-\eta ^{p}\right) ^{q}\left\vert \nabla
u^{1}\right\vert ^{q}+p^{q}\eta ^{\left( p-1\right) q}\left\vert \nabla \eta
\right\vert ^{q}\left( u^{1}-k\right) ^{q}+L\sum\limits_{\alpha
=2}^{m}\left\vert \nabla u^{\alpha }\right\vert ^{q}\,dx$ \\ 
$+\int\limits_{A_{k,s}^{1}}L\left\vert u^{1}-\eta ^{p}\left( u^{1}-k\right)
\right\vert ^{q}+L\sum\limits_{\alpha =2}^{m}\left\vert M^{\alpha
}\right\vert ^{q}+La\left( x\right) \,dx$%
\end{tabular}%
\end{equation*}%
Since $1-\eta ^{p}=0$ on $A_{k,t}^{1}$, it follows that%
\begin{equation}
\begin{tabular}{l}
$\ \int\limits_{A_{k,s}^{1}}\left\vert \nabla u^{1}\right\vert
^{p}\,+\left\vert \nabla u^{1}\right\vert ^{q}\,dx$ \\ 
$\leq L\int\limits_{A_{k,s}^{1}\backslash A_{k,t}^{1}}\left\vert \nabla
u^{1}\right\vert ^{p}+\left\vert \nabla u^{1}\right\vert
^{q}\,dx+p^{p}\int\limits_{A_{k,s}^{1}\backslash A_{k,t}^{1}}\eta
^{p-1}\left\vert \nabla \eta \right\vert ^{p}\left( u^{1}-k\right) ^{p}\,dx$
\\ 
$+\int\limits_{A_{k,s}^{1}}\sum\limits_{\alpha =1}^{m}\left\vert M^{\alpha
}\right\vert ^{q}\,dx+\int\limits_{A_{k,s}^{1}}L\left\vert u^{1}-\eta
^{p}\left( u^{1}-k\right) \right\vert ^{q}+L\sum\limits_{\alpha
=2}^{m}\left\vert M^{\alpha }\right\vert ^{q}+\left( L+1\right) a\left(
x\right) \,dx$ \\ 
$+\int\limits_{A_{k,s}^{1}}p^{q}\eta ^{\left( p-1\right) q}\left\vert \nabla
\eta \right\vert ^{q}\left( u^{1}-k\right) ^{q}+L\sum\limits_{\alpha
=2}^{m}\left\vert \nabla u^{\alpha }\right\vert ^{q}\,dx$%
\end{tabular}%
\end{equation}%
We observe that we must estimate the term 
\begin{equation*}
\int\limits_{A_{k,s}^{1}}L\left\vert u^{1}-\eta ^{p}\left( u^{1}-k\right)
\right\vert ^{q}\,dx\leq 2^{q-1}L\int\limits_{A_{k,s}^{1}}\left\vert
u^{1}\right\vert ^{q}+\left\vert \eta ^{p}\left( u^{1}-k\right) \right\vert
^{q}\,dx
\end{equation*}%
then, since $\left\vert u^{1}\right\vert <M^{1}$ in $\Sigma $, it follows 
\begin{equation*}
\int\limits_{A_{k,s}^{1}}L\left\vert u^{1}-\eta ^{p}\left( u^{1}-k\right)
\right\vert ^{q}\,dx\leq 2^{q-1}L\int\limits_{A_{k,s}^{1}}\left\vert
M^{1}\right\vert ^{q}+\left\vert \eta ^{p}\left( u^{1}-k\right) \right\vert
^{q}\,dx
\end{equation*}%
using Young inelality we get%
\begin{equation}
\int\limits_{A_{k,s}^{1}}L\left\vert u^{1}-\eta ^{p}\left( u^{1}-k\right)
\right\vert ^{q}\,dx\leq 2^{q-1}L\int\limits_{A_{k,s}^{1}}\left\vert
M^{1}\right\vert ^{q}+\frac{p-q}{p}\eta ^{p}\left( s-t\right) ^{\frac{p}{p-q}%
}+\frac{q}{p}\frac{\eta ^{p}\left( u^{1}-k\right) ^{p}}{\left( s-t\right)
^{p}}\,dx
\end{equation}%
Using (5.3) and (5.4) it follows%
\begin{equation}
\begin{tabular}{l}
$\ \int\limits_{A_{k,s}^{1}}\left\vert \nabla u^{1}\right\vert
^{p}\,+\left\vert \nabla u^{1}\right\vert ^{q}\,dx$ \\ 
$\leq L\int\limits_{A_{k,s}^{1}\backslash A_{k,t}^{1}}\left\vert \nabla
u^{1}\right\vert ^{p}+\left\vert \nabla u^{1}\right\vert
^{q}\,dx+p^{p}\int\limits_{A_{k,s}^{1}\backslash A_{k,t}^{1}}\eta
^{p-1}\left\vert \nabla \eta \right\vert ^{p}\left( u^{1}-k\right) ^{p}\,dx$
\\ 
$+\int\limits_{A_{k,s}^{1}}\left( 1+2^{q-1}L\right) \sum\limits_{\alpha
=1}^{m}\left\vert M^{\alpha }\right\vert ^{q}+\left( L+1\right) a\left(
x\right) \,dx$ \\ 
$+2^{q-1}L\int\limits_{A_{k,s}^{1}}\frac{p-q}{p}\eta ^{p}\left( s-t\right) ^{%
\frac{p}{p-q}}+\frac{q}{p}\frac{\eta ^{p}\left( u^{1}-k\right) ^{p}}{\left(
s-t\right) ^{p}}\,dx$ \\ 
$+\int\limits_{A_{k,s}^{1}}p^{q}\eta ^{\left( p-1\right) q}\left\vert \nabla
\eta \right\vert ^{q}\left( u^{1}-k\right) ^{q}+L\sum\limits_{\alpha
=2}^{m}\left\vert \nabla u^{\alpha }\right\vert ^{q}\,dx$%
\end{tabular}%
\end{equation}%
We observe that we must estimate the term 
\begin{equation*}
\left( L+1\right) \int\limits_{A_{k,s}^{1}}a\left( x\right) \,dx
\end{equation*}%
using the H\"{o}lder inequality \ref{4.2} obtaining%
\begin{equation}
\begin{tabular}{l}
$\left( L+1\right) \int\limits_{A_{k,s}^{1}}a\left( x\right) \,dx$ \\ 
$\left( L+1\right) \left[ \mathcal{L}^{n}\left( A_{k,s}^{1}\right) \right]
^{1-\frac{1}{\sigma }}\left[ \int\limits_{A_{k,s}^{1}}a^{\sigma }\,dx\right]
^{\frac{1}{\sigma }}$%
\end{tabular}%
\end{equation}%
Similarly we can also estimate the term%
\begin{equation*}
\int\limits_{A_{k,s}^{1}}L\sum\limits_{\alpha =2}^{m}\left\vert \nabla
u^{\alpha }\right\vert ^{q}\,dx
\end{equation*}%
using the H\"{o}lder inequality \ref{4.2} obtaining%
\begin{equation}
\begin{tabular}{l}
$\int\limits_{A_{k,s}^{1}}L\sum\limits_{\alpha =2}^{m}\left\vert \nabla
u^{\alpha }\right\vert ^{q}\,dx$ \\ 
$\leq mL\left[ \mathcal{L}^{n}\left( A_{k,s}^{1}\right) \right] ^{1-\frac{q}{%
p}}\left[ \int\limits_{A_{k,s}^{1}}\left\vert \nabla u\right\vert ^{p}\,dx%
\right] ^{\frac{q}{p}}$%
\end{tabular}%
\end{equation}%
Using(5.5), (5.6) and (5.7)\ we get%
\begin{equation}
\begin{tabular}{l}
$\ \int\limits_{A_{k,s}^{1}}\left\vert \nabla u^{1}\right\vert
^{p}\,+\left\vert \nabla u^{1}\right\vert ^{q}\,dx$ \\ 
$\leq L\int\limits_{A_{k,s}^{1}\backslash A_{k,t}^{1}}\left\vert \nabla
u^{1}\right\vert ^{p}+\left\vert \nabla u^{1}\right\vert
^{q}\,dx+p^{p}\int\limits_{A_{k,s}^{1}\backslash A_{k,t}^{1}}\eta
^{p-1}\left\vert \nabla \eta \right\vert ^{p}\left( u^{1}-k\right) ^{p}\,dx$
\\ 
$+2^{q-1}L\int\limits_{A_{k,s}^{1}}\frac{p-q}{p}\eta ^{p}\left( s-t\right) ^{%
\frac{p}{p-q}}+\frac{q}{p}\frac{\eta ^{p}\left( u^{1}-k\right) ^{p}}{\left(
s-t\right) ^{p}}\,dx+\int\limits_{A_{k,s}^{1}}p^{q}\eta ^{\left( p-1\right)
q}\left\vert \nabla \eta \right\vert ^{q}\left( u^{1}-k\right) ^{q}\,dx$ \\ 
$+\tilde{L}\left( q,m,M,L\right) \mathcal{L}^{n}\left( A_{k,s}^{1}\right)
+2m\left( L+1\right) \left[ \mathcal{L}^{n}\left( A_{k,s}^{1}\right) \right]
^{1-\frac{q}{p}}\left\Vert u\right\Vert _{W^{1,p}\left( A_{k,s}^{1}\right)
}^{q}+\left( L+1\right) \left[ \mathcal{L}^{n}\left( A_{k,s}^{1}\right) %
\right] ^{1-\frac{1}{\sigma }}\left\Vert a\right\Vert _{L^{\sigma }\left(
A_{k,s}^{1}\right) }$%
\end{tabular}%
\end{equation}%
where $\tilde{L}\left( q,m,M,L\right) =m\left( 1+2^{q-1}L\right) \left(
1+\left\vert M\right\vert \right) ^{q}$. Using the Young Inequality \ref{4.1}
we have%
\begin{equation}
\begin{tabular}{l}
$\ \int\limits_{A_{k,s}^{1}}p^{q}\eta ^{\left( p-1\right) q}\left\vert
\nabla \eta \right\vert ^{q}\left( u^{1}-k\right) ^{q}\,dx$ \\ 
$=\int\limits_{A_{k,s}^{1}\backslash A_{k,t}^{1}}p^{q}\eta ^{\left(
p-1\right) q}\left\vert \nabla \eta \right\vert ^{q}\left( u^{1}-k\right)
^{q}\,dx$ \\ 
$\leq p^{q}\int\limits_{A_{k,s}^{1}\backslash A_{k,t}^{1}}1+\left\vert
\nabla \eta \right\vert ^{p}\left( u^{1}-k\right) ^{p}\,dx$ \\ 
$\leq p^{q}\mathcal{L}^{n}\left( A_{k,s}^{1}\right)
+p^{q}\int\limits_{A_{k,s}^{1}\backslash A_{k,t}^{1}}\left\vert \nabla \eta
\right\vert ^{p}\left( u^{1}-k\right) ^{p}\,dx$%
\end{tabular}%
\end{equation}%
then using (5.8) and (5.9), since $0\leq \eta \leq 1$, it follows%
\begin{equation*}
\begin{tabular}{l}
$\ \int\limits_{A_{k,s}^{1}}\left\vert \nabla u^{1}\right\vert
^{p}\,+\left\vert \nabla u^{1}\right\vert ^{q}\,dx$ \\ 
$\leq L\int\limits_{A_{k,s}^{1}\backslash A_{k,t}^{1}}\left\vert \nabla
u^{1}\right\vert ^{p}+\left\vert \nabla u^{1}\right\vert ^{q}\,dx+\left(
p^{p}+p^{q}\right) \int\limits_{A_{k,s}^{1}\backslash A_{k,t}^{1}}\left\vert
\nabla \eta \right\vert ^{p}\left( u^{1}-k\right) ^{p}\,dx$ \\ 
$+2^{q-1}L\int\limits_{A_{k,s}^{1}}\frac{p-q}{p}\eta ^{p}\left( s-t\right) ^{%
\frac{p}{p-q}}+\frac{q}{p}\frac{\eta ^{p}\left( u^{1}-k\right) ^{p}}{\left(
s-t\right) ^{p}}\,dx+\left( p^{q}+\tilde{L}\left( q,m,M,L\right) \right) 
\mathcal{L}^{n}\left( A_{k,s}^{1}\right) $ \\ 
$+2m\left( L+1\right) \left[ \mathcal{L}^{n}\left( A_{k,s}^{1}\right) \right]
^{1-\frac{q}{p}}\left\Vert u\right\Vert _{W^{1,p}\left( A_{k,s}^{1}\right)
}^{q}+\left( L+1\right) \left[ \mathcal{L}^{n}\left( A_{k,s}^{1}\right) %
\right] ^{1-\frac{1}{\sigma }}\left\Vert a\right\Vert _{L^{\sigma }\left(
A_{k,s}^{1}\right) }$%
\end{tabular}%
\end{equation*}%
Moreover, since $0<\left( s-t\right) <R<1$, $\left\vert \nabla \eta
\right\vert ^{p}\leq \frac{2^{p}}{\left( s-t\right) ^{p}}$, $1\leq q<\frac{%
p^{2}}{n}$ and $\sigma >\frac{n}{p}$, we get%
\begin{equation*}
\begin{tabular}{l}
$\int\limits_{A_{k,s}^{1}}\left\vert \nabla u^{1}\right\vert
^{p}\,+\left\vert \nabla u^{1}\right\vert ^{q}\,dx$ \\ 
$\leq L\int\limits_{A_{k,s}^{1}\backslash A_{k,t}^{1}}\left\vert \nabla
u^{1}\right\vert ^{p}+\left\vert \nabla u^{1}\right\vert ^{q}\,dx+\frac{D_{1}%
}{\left( s-t\right) ^{p}}\int\limits_{A_{k,s}^{1}}\left( u^{1}-k\right)
^{p}\,dx+$ \\ 
$D_{\Sigma }\left[ \mathcal{L}^{n}\left( A_{k,s}^{1}\right) \right] ^{1-%
\frac{p}{n}+\varepsilon }$%
\end{tabular}%
\end{equation*}%
where%
\begin{equation*}
D_{1}=\left( \left( p^{p}+p^{q}\right) 2^{p}+\frac{q2^{q-1}L}{p}\right)
\end{equation*}%
\begin{equation*}
D_{\Sigma }=\left( \left( 2^{q-1}L\frac{p-q}{p}+p^{q}+\tilde{L}\left(
q,m,M,L\right) \right) \left( \varpi _{n}\right) ^{\frac{p}{n}-\varepsilon
}+2m\left( L+1\right) \left\Vert u\right\Vert _{W^{1,p}\left( \Sigma \right)
}^{q}+\left( L+1\right) \left\Vert a\right\Vert _{L^{\sigma }\left( \Sigma
\right) }\right)
\end{equation*}%
with $\varpi _{n}=\mathcal{L}^{n}\left( B_{1}\left( 0\right) \right) $.
Moreover, it follows%
\begin{equation*}
\begin{tabular}{l}
$\int\limits_{A_{k,s}^{1}}\left\vert \nabla u^{1}\right\vert
^{p}\,+\left\vert \nabla u^{1}\right\vert ^{q}\,dx$ \\ 
$\leq \frac{L}{L+1}\int\limits_{A_{k,s}^{1}}\left\vert \nabla
u^{1}\right\vert ^{p}+\left\vert \nabla u^{1}\right\vert ^{q}\,dx+\frac{D_{1}%
}{\left( L+1\right) \left( s-t\right) ^{p}}\int\limits_{A_{k,s}^{1}}\left(
u^{1}-k\right) ^{p}\,dx+$ \\ 
$\frac{D_{\Sigma }}{L+1}\left[ \mathcal{L}^{n}\left( A_{k,s}^{1}\right) %
\right] ^{1-\frac{p}{n}+\varepsilon }$%
\end{tabular}%
\end{equation*}%
Using Lemma \ref{lemma3} we have the Caccioppoli Inequaity%
\begin{equation*}
\begin{tabular}{l}
$\int\limits_{A_{k,\varrho }^{1}}\left\vert \nabla u^{1}\right\vert
^{p}\,+\left\vert \nabla u^{1}\right\vert ^{q}\,dx$ \\ 
$\leq \frac{C_{1,\Sigma }}{\left( R-\varrho \right) ^{p}}\int%
\limits_{A_{k,R}^{1}}\left( u^{1}-k\right) ^{p}\,dx+C_{2,\Sigma }\left[ 
\mathcal{L}^{n}\left( A_{k,R}^{1}\right) \right] ^{1-\frac{p}{n}+\varepsilon
}$%
\end{tabular}%
\end{equation*}%
Similarly you can proceed for $\alpha =2,...,m$ and we get%
\begin{equation*}
\begin{tabular}{l}
$\int\limits_{A_{k,\varrho }^{\alpha }}\left\vert \nabla u^{\alpha
}\right\vert ^{p}\,+\left\vert \nabla u^{\alpha }\right\vert ^{q}\,dx$ \\ 
$\leq \frac{C_{1,\Sigma }}{\left( R-\varrho \right) ^{p}}\int%
\limits_{A_{k,R}^{\alpha }}\left( u^{\alpha }-k\right) ^{p}\,dx+C_{2,\Sigma }%
\left[ \mathcal{L}^{n}\left( A_{k,R}^{\alpha }\right) \right] ^{1-\frac{p}{n}%
+\varepsilon }$%
\end{tabular}%
\end{equation*}%
for every $\alpha =1,...,m$. Since $-u$ is a minimizer of the integral
functional 
\begin{equation*}
\mathcal{\tilde{F}}\left( v,\Omega \right) =\int\limits_{\Omega
}\sum\limits_{\alpha =1}^{m}\left\vert \nabla v^{\alpha }\right\vert
^{p}+G\left( x,-v,\left\vert \nabla v^{1}\right\vert ,...,\left\vert \nabla
v^{m}\right\vert \right) \,dx
\end{equation*}%
then 
\begin{equation*}
\begin{tabular}{l}
$\int\limits_{B_{k,\varrho }^{\alpha }}\left\vert \nabla u^{\alpha
}\right\vert ^{p}\,+\left\vert \nabla u^{\alpha }\right\vert ^{q}\,dx$ \\ 
$\leq \frac{C_{1,\Sigma }}{\left( R-\varrho \right) ^{p}}\int%
\limits_{B_{k,R}^{\alpha }}\left( k-u^{\alpha }\right) ^{p}\,dx+C_{2,\Sigma }%
\left[ \mathcal{L}^{n}\left( B_{k,R}^{\alpha }\right) \right] ^{1-\frac{p}{n}%
+\varepsilon }$%
\end{tabular}%
\end{equation*}%
for every $\alpha =1,...,m$.
\end{proof}

Now, Theorem \ref{th1} follows by applying Theorem 7 and Proposition 7.1,
Lemma 7.2 and Lemma 7.3 of [24].

\bigskip

\bigskip

\end{document}